\documentclass[letterpaper]{amsart}
\usepackage[utf8]{inputenc}

\usepackage{amsfonts}
\usepackage{amssymb}
\usepackage{tikz}
\usetikzlibrary{decorations.pathreplacing}
\usepackage{mathtools}
\usepackage{enumerate}
\usepackage{xcolor}
\usepackage{graphicx}
\usepackage{soul}
\usepackage{subcaption}
\usepackage[hide links]{hyperref}
\usepackage{comment}

\usepackage{concmath}
\usepackage{ccfonts}
\usepackage{eulervm}
\labelformat{subfigure}{\thefigure(\textsc{#1})}

\theoremstyle{plain}
\newtheorem{thm}{Theorem}
\newtheorem{prop}[thm]{Proposition}
\newtheorem{lem}[thm]{Lemma}
\newtheorem{cor}[thm]{Corollary}

\newtheorem{ques}{Question}

\theoremstyle{definition}

\newtheorem{ex}{Example}

\theoremstyle{remark}
\newtheorem*{rmk}{Remark}

\DeclareMathOperator{\dist}{dist}
\DeclareMathOperator{\ecc}{ecc}

\makeatletter
\def\smallunderbrace#1{\mathop{\vtop{\m@th\ialign{##\crcr
   $\hfil\displaystyle{#1}\hfil$\crcr
   \noalign{\kern3\p@\nointerlineskip}%
   \tiny\upbracefill\crcr\noalign{\kern3\p@}}}}\limits}
\makeatother

\title{On fixing and distinguishing numbers of trees}

\author[Buchanan]{Calum~Buchanan}
\address{Department of Mathematics, University of Denver, Denver, CO, USA}
\email{calum.buchanan@du.edu}

\author[Dankelmann]{Peter~Dankelmann}
\address{Department of Mathematics and Applied Mathematics, University of Johannesburg, Johannesburg, South Africa}
\email{pdankelmann@uj.ac.za}

\author[Harris]{Isabel~Harris}
\address{Department of Mathematics, Rice University, Houston, TX, USA}
\email{isabel.harris@rice.edu}

\author[Horn]{Paul~Horn}
\address{Department of Mathematics, University of Denver, Denver, CO, USA and Department of Mathematics and Applied Mathematics, University of Johannesburg, Johannesburg, South Africa}
\email{paul.horn@du.edu}

\author[Perry]{K.~E.~Perry}
\address{Mathematics, Soka University of America, Aliso Viejo, CA, USA}
\email{kperry@soka.edu}

\author[Rivett-Carnac]{Emily~Rivett-Carnac}
\address{Department of Mathematics and Applied Mathematics, University of Johannesburg, 
Johannesburg, South Africa}
\email{emilyjcarnac@gmail.com}

\date{\today}

\begin{document}

\dedicatory{This article is dedicated to the memory of Debra Boutin.}

\begin{abstract}

A graph $G$ is {\em $D$-distinguishable} if there is a labeling of its vertices with $D$ labels such that the only automorphism of $G$ which preserves the labeling is the identity. The {\em distinguishing number} of $G$ is the minimum value $D$ for which $G$ is $D$-distinguishable. The {\em fixing number} of $G$ is the minimum cardinality of a subset of the vertices of $G$ which is fixed pointwise only by the trivial automorphism. We prove that the fixing number of any $2$-distinguishable tree of order $n \geq 3$ is at most $4n/11$, or at most $(D-1)n / (D+1)$ for a $D$-distinguishable tree ($D \geq 3$). For every $D$ and $r$ at least $2$, we characterize the $D$-distinguishable trees with radius $r$ by constructing a universal tree $T_r^D$ which has the property that a tree $T$ of radius $r$ is $D$-distinguishable if and only if $T$ is a union of branches of $T_r^D$. We obtain a similar collection of universal trees for the property of having a constant paint cost spectrum, {\em i.e.}, the minimum size of the complement of a color class in a distinguishing $D$-coloring of $T$ is equal to the fixing number. Finally, we prove bounds on the distinguishing and fixing numbers of a tree in terms of the eccentricities of its vertices.

\end{abstract}

\maketitle


\section{Introduction}

A {\em distinguishing coloring} of a graph is a (not necessarily proper) vertex coloring whose color classes are preserved only by the trivial automorphism. In other words, each vertex of the graph can be identified uniquely by its color and graph properties. A graph $G$ is said to be {\em $D$-distinguishable} if there exists a distinguishing coloring of $G$ using at most $D$ colors and the {\em distinguishing number} of $G$, denoted $D(G)$, is the minimum value $D$ for which $G$ is $D$-distinguishable. Graph distinguishing was introduced independently by Albertson and Collins in~\cite{AC1996} and by Babai in~\cite{Ba1977} under the name {\em asymmetric coloring.}

A {\em fixing set} of $G$ is a subset of the vertex set $V(G)$ which is fixed pointwise only by the trivial automorphism. The {\em fixing number} of $G$, denoted $F(G)$, is the minimum cardinality of a fixing set. Fixing numbers were introduced independently by Harary in~\cite{harary96, H2001}, Boutin in~\cite{B2006} (under the name {\em determining number}), and Fijav\v{z} and Mohar in~\cite{fijavz2004rigidity} (under the name {\em rigidity index}). 

Distinguishing numbers and fixing numbers have been well-studied since their introduction, with the majority of work related to determining these parameters for specific classes of graphs. See~\cite{ACD2007, BCKLPR2020b,  b2009, Cha2006b, FNT2008, HIKST2017, WKT2007, STW2012}, for example. In general, however, not much work has been done considering the two parameters together, even though clear relationships exist. For example, if one colors each vertex in a fixing set of a graph $G$ a different color and colors the remaining vertices another color, this is a $(F(G)+1)$-distingushing coloring of $G$ and so it follows that $D(G) \leq F(G)+1$~\cite{AB2007}.
On the other hand, any union of all but one color classes in a distinguishing coloring of $G$ comprises a fixing set, so the fixing number of an $n$-vertex graph $G$ is at most $\frac{D(G) - 1}{D(G)} n$ by the pigeonhole principle.
We are interested in bounds of the latter type, and we define the \emph{fixing density} of $G$ to be $F(G) / n$.

It is also worth noting that, as both parameters measure how symmetric a graph is, the study of distinguishing numbers and fixing numbers also has nice applications to the graph isomorphism problem. In particular, breaking symmetry is a key tool in the design of algorithms for determining graph isomorphism~\cite{BC2011}. On the other hand, there are multiple classes of graphs for which the fixing number and/or the distinguishing number is known, but the automorphism group is not well understood~\cite{WKT2007, STW2012}.

The {\em paint cost} of a $2$-distinguishable graph, introduced by Boutin in 2008~\cite{B2008}, is the minimum size of a color class in a distinguishing $2$-coloring.
It turns out that, for a large number of graph families, all but a finite number of members are $2$-distinguishable. For example, hypercubes~\cite{BC2004}, Cartesian powers for most connected graphs~\cite{A2005, IK2006, KZ2007}, and Kneser graphs~\cite{AB2007} all have this property. 

This notion of paint cost was generalized in~\cite{B2023} to graphs $G$ of arbitrary distinguishing number $D$. For $t \geq D(G)$, let $\mathcal{D}^t(G)$ denote the set of distinguishing $t$-colorings of $G$. The {\em $t$-paint cost} $\rho^t(G)$ is 
\[
\min_{c \in \mathcal{D}^t(G)}{ \{ n - |C| : C \text{ a largest color class in } c \} }.
\]
It is not hard to see that $\rho^t$ is monotone decreasing with $t$, and in~\cite{B2023} it was shown that for $t \geq F(G)+1$, $\rho^t(G) = F(G)$. Mafunda, Merzel, Perry, and Varvak~\cite{MMPV2026} then extended the platform for studying generalized paint cost with the introduction of the \emph{paint cost spectrum}: $(D(G); \rho^{D(G)}(G), \rho^{D(G)+1}(G), \dots, \rho^{F(G)+1}(G))$. 
For instance, the paint cost spectrum of a cycle of length $6$ is $(2;3,2)$, as depicted in Figure~\ref{fig:paintcost}. Of particular interest are the graphs with constant paint cost spectrum, i.e. graphs $G$ such that $\rho^{D(G)}(G) = \rho^{F(G)+1}(G)$ and it was posted by Boutin in~\cite{B2023} to characterize this class.

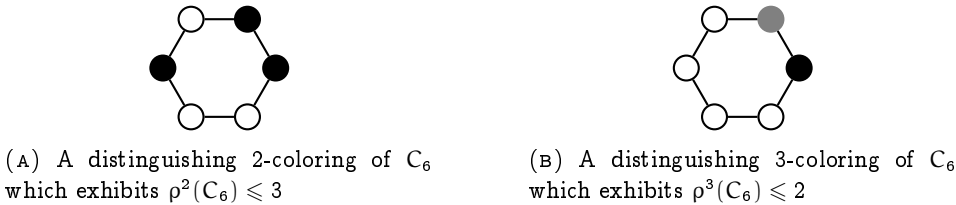
\begin{figure}
\begin{subfigure}{.45\textwidth}
    \centering
    \begin{tikzpicture}
        [thick, scale=0.75, minimum size=.3cm,
        blackvertex/.style={circle,thick,draw, fill=black!100},
        whitevertex/.style={circle, thick, draw},
        grayvertex/.style={circle,thick,draw,fill=black!50}]

        \foreach \i in {0,1,3}
        {
        \node[blackvertex] (\i) at (\i*360/6:1) {};
        }
        \foreach \i in {2,4,5}
        {
        \node[whitevertex] (\i) at (\i*360/6:1) {};
        }
        \draw (0) -- (1) -- (2) -- (3) -- (4) -- (5) -- (0);
    \end{tikzpicture}
    \caption{A distinguishing $2$-coloring of $C_6$ which exhibits $\rho^2(C_6) \leq 3$}
    \label{subfig:paintcost_2-coloring}
\end{subfigure}
\hfill
\begin{subfigure}{.45\textwidth}
    \centering
    \begin{tikzpicture}
        [thick, scale=0.75, minimum size=.3cm,
        blackvertex/.style={circle,thick,draw, fill=black!100},
        whitevertex/.style={circle, thick, draw},
        grayvertex/.style={circle,thick,draw=black!50,fill=black!50}]

        \node[blackvertex] (0) at (0:1) {};
        \node[grayvertex] (1) at (60:1) {};
        \foreach \i in {2,3,4,5}
        {
        \node[whitevertex] (\i) at (\i*360/6:1) {};
        }
        \draw (0) -- (1) -- (2) -- (3) -- (4) -- (5) -- (0);
    \end{tikzpicture}
    \caption{A distinguishing $3$-coloring of $C_6$ which exhibits $\rho^3(C_6) \leq 2$}
    \label{subfig:paintcost_3-coloring}
\end{subfigure}
	\caption{Colorings of the cycle of length $6$ which minimize the $2$- and $3$-paint cost, respectively.}
	\label{fig:paintcost}
\end{figure}

We will be interested in graphs, unlike the cycle, for which $\rho^t$ is constant as $t$ varies.
Equivalently, these are graphs with distinguishing number $D$ for which $\rho^{D}(G) = F(G)$.
The equivalence is due to the fact that $\rho^t$ is minimized when $t = F(G)+1$ by coloring a minimum fixing set rainbow.

Trees are a fundamental class of graphs that offer a rich setting to explore the connection between the distinguishing number and fixing number. For example, Cheng and, independently, Arvind and Devanur obtained $O(n \log n)$-time algorithms for finding the distinguishing number of trees~\cite{Che2006, AD2004}. In~\cite{Ty2004}, Tymoczko showed that for every tree $T$, $D(T) \leq \Delta(T)$, where $\Delta(T)$ is the maximum degree of a vertex in $T$, and recently, Alikhani and Soltani~\cite{AS2016a} characterized trees with distinguishing number two and radius at most three. 

We obtain sharp upper bounds on the fixing density of trees in terms of their distinguishing numbers. In particular, we show that a tree of order at least three that is $2$-distinguishable has fixing density at most $4/11$. More generally, when the distinguishing number of a tree is $D \geq 3$, its fixing density does not exceed $(D-1)/(D+1)$. 

The paper is organized as follows. Definitions and preliminary results pertaining to the distinguishing number and fixing number are introduced in Section \ref{sec: prelim results}. Bounds on the fixing density of $D$-distinguishable trees are obtained in Section \ref{sec:fixing densities}, while a characterization of all $D$-distinguishable trees with radius $r\geq 1$ is given in Section \ref{sec:universal trees}. Lastly, Section~\ref{sec: ecc sequences} deals with establishing bounds on the distinguishing number and fixing number of trees with a given eccentric sequence and we conclude the paper with some open questions in Section~\ref{sec: future work}.


\section{Definitions and preliminary results}
\label{sec: prelim results}

We establish the following definitions for our subsequent proofs. The order of a graph $G$ is denoted $|G|$. 
The degree of a vertex $v$ in $G$ is denoted $\deg_G(v)$, or simply $\deg(v)$ when the graph is clear from context.
When $G$ is connected, we denote the distance between vertices $v$ and $w$ by $\dist(v,w)$.
The {\em eccentricity} of $v$, denoted $\ecc(v)$, is $\max_{w \in V(G)} \dist(v,w)$.
The minimum eccentricity of a vertex in $G$ is called the {\em radius} of $G$, and a vertex obtaining this minimum is called a {\em central vertex}. The {\em diameter} of $G$ is the maximum ecccentricity of one of its vertices.

Throughout this paper, we are mainly concerned with trees: connected graphs without cycles.
It is well-known folklore that any given tree has either one or two central vertices, depending on whether its diameter is even or odd, respectively. 
As automorphisms are distance-preserving, it will often be useful for us to think of our trees as rooted at a central vertex.
The following result of Erwin and Harary on fixing sets of trees will also be of use.

\begin{thm}[\cite{EH2006}]\label{thm: leaf fixing set}
    For every tree $T$, there is a minimum cardinality fixing set consisting only of leaves of $T$.
\end{thm}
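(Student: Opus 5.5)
We prove the statement by an exchange argument. Start from an arbitrary minimum fixing set $S$ of $T$ and replace its non-leaf vertices by leaves one at a time, never increasing the size and never losing the fixing property. The whole argument rests on one observation: if an automorphism $\varphi$ fixes a leaf $\ell$, then it fixes every vertex on every path from $\ell$ to another fixed vertex. This holds because paths in a tree are unique and automorphisms preserve paths.

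Since $T$ is finite, choose a minimum fixing set $S$ with the largest possible number of leaves, and suppose, for contradiction, that $S$ contains an internal vertex $v$. Root $T$ at $v$ and look at the branches at $v$, that is, the components of $T - v$ together with $v$. We need a leaf $\ell$ such that $S' = (S \setminus \{v\}) \cup \{\ell\}$ is still a fixing set. Then $|S'| \le |S|$ and $S'$ contains more leaves, contradicting the choice of $S$. The constraint is that $\ell$ must be chosen so that fixing $\ell$ together with $S \setminus \{v\}$ forces $v$ to be fixed.

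\textbf{Case 1: some branch at $v$, say $B$, contains no vertex of $S \setminus \{v\}$.} Let $\ell$ be a leaf of $T$ in $B$; one exists because $B$ is a finite tree and its vertices farthest from $v$ are leaves of $T$. Suppose $\varphi$ fixes $S'$ pointwise.
\begin{itemize}
\item If $S \setminus \{v\}$ is nonempty, every path from $\ell$ to a vertex $u \in S \setminus \{v\}$ passes through $v$, since $u$ lies outside $B$. Hence $\varphi$ fixes that path, and in particular fixes $v$.
\item If $S = \{v\}$, then $T$ has at least two branches at $v$, because $v$ is internal. Choose $\ell$ farthest from $v$, so that $\ell$ has maximal eccentricity. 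In the subcase where this still fails to pin $v$, the tree is a path. For a path, one endpoint is a fixing set of size $1$, so a single leaf works directly.
\end{itemize}
In the general subcase, $\varphi$ fixes $v$ and everything in $S \setminus \{v\}$, so $\varphi$ fixes $S$ pointwise and is therefore trivial.

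\textbf{Case 2: every branch at $v$ contains a vertex of $S \setminus \{v\}$.} Since $v$ is internal, there are at least two such branches. Pick vertices $u_1, u_2 \in S \setminus \{v\}$ in different branches. The unique $u_1$--$u_2$ path passes through $v$, so any automorphism fixing $u_1$ and $u_2$ fixes $v$. Thus $S \setminus \{v\}$ is already a fixing set of smaller size, contradicting minimality. (Here we can even drop $v$ without adding a leaf.)

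\textbf{Main obstacle.} The delicate step is Case 1 when $S \setminus \{v\}$ is empty or is confined to a single branch. In that situation we must check that the chosen leaf $\ell$ actually forces $v$ to be fixed, by the path argument.

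If $S \setminus \{v\}$ lies in branches other than $B$, the path argument above already covers it. The only genuinely special configuration is $|S| = 1$ with $S = \{v\}$. There $F(T) = 1$, and we must show that some leaf alone is a fixing set. Fixing an internal $v$ alone kills all automorphisms, so the branches at $v$ are pairwise non-isomorphic and each branch, rooted at $v$, is rigid.

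Take $\ell$ a leaf at maximum distance from $v$ in the largest branch. An automorphism fixing $\ell$ maps $v$ to a vertex at the same distance from $\ell$ and with the same rooted structure. We would verify via the central-vertex and eccentricity invariance that this forces $v$ to be fixed. This is the step where care is needed, and it may require choosing $\ell$ more cleverly, for example along a longest path through $v$.
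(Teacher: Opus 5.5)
Your Case~2 and the first bullet of Case~1 are correct. Whenever $S\setminus\{v\}\neq\emptyset$, a vertex of $S\setminus\{v\}$ acts as an anchor, and the path from $\ell$ to it pins $v$.

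The gap is the remaining case $S=\{v\}$ with $v$ internal. You do not actually settle it, and the claims you make there are false: choosing $\ell$ farthest from $v$ does not work, and its failure does not force $T$ to be a path. Take the spider with center $a$ and legs $a\,v\,w$, $a\,v'\,w'$ and $a\,p_1\,p_2\,p_3$. Its only nontrivial automorphism swaps the two short legs, so $\{v\}$ is a minimum fixing set and $v$ is internal. The leaf farthest from $v$ is $p_3$, at distance $4$ versus $3$ for $w'$ and $1$ for $w$. It is also what your ``largest branch'' recipe produces, and it is an end of the longest path through $v$. But the swap fixes $p_3$, so $\{p_3\}$ is not a fixing set, even though $T$ is not a path. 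The correct choice is the other end, $w$. When $S=\{v\}$, nothing in $S$ tells you which side of $v$ to look on, so you need an anchor that does not come from $S$.

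That anchor is the center of $T$, which every automorphism preserves.
\begin{itemize}
\item If $T$ has a unique central vertex $c$, then $v\neq c$. Otherwise every automorphism fixes $v$, so $T$ is asymmetric and $F(T)=0$. Let $\ell$ be a leaf in a component of $T-v$ not containing $c$. An automorphism fixing $\ell$ also fixes $c$, hence the $c$--$\ell$ path, which passes through $v$.
\item If the center is an edge $c_1c_2$, every automorphism either preserves or swaps the two components of $T-c_1c_2$. Say $v$ lies in the component $H_1$ containing $c_1$. Let $\ell$ be a leaf in a component of $T-v$ avoiding both $c_1$ and $c_2$. Such a component exists because $\deg v\ge 2$, and it lies in $H_1$. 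An automorphism fixing $\ell$ cannot swap the sides, so it fixes $c_1$ and hence the $c_1$--$\ell$ path, which contains $v$; when $v=c_1$ this is immediate.
\end{itemize}
This choice of $\ell$ works for every minimum fixing set $S$, not only when $|S|=1$; if $v$ is the unique central vertex, simply delete it from $S$. So it could replace your case analysis altogether. The paper itself gives no proof of this statement; it cites it from Erwin and Harary.
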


Let $T$ be a rooted tree with root $x$.
The {\em branches} $b_1, b_2, \dots, b_{\deg(x)}$ of $T$ are the maximal subtrees of $T$ which contain $x$ as a leaf.
A {\em branched subgraph} of $T$ is any nonempty union of its branches or the one-vertex subtree consisting only of $x$.

We refer to vertices of degree at least $3$ in a tree as {\em high-degree}. A {\em spider} is a tree with at most one high-degree vertex $x$. If $T$ is a spider with root vertex $x$, let $n_k(T)$ denote the number of branches of $T$ with $k$ edges. Then
\[
|T| = 1 + \sum k n_k(T). 
\]
Note that the fixing density of $T$, then, is
\[
\frac{\sum \max\{0,(n_k(T)-1)\}}{1 + \sum k n_k(T)}.
\]
We also observe the following.
\begin{lem} 
If $T$ is a $D$-distinguishable spider rooted at $x$, then $n_k(T) \leq D^{k}$ for all $k$.
\end{lem}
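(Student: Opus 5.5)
We argue by contradiction, using the pigeonhole principle on the colorings of the branches. Fix a distinguishing coloring $c$ of $T$ with at most $D$ colors, and fix $k \geq 1$. Let $b_1, \dots, b_m$ be the branches of $T$ with exactly $k$ edges, so $m = n_k(T)$.

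First we describe these branches. In a spider rooted at $x$, every vertex other than $x$ has degree at most $2$. Hence each branch $b_i$ is a path $x = v_0^i, v_1^i, \dots, v_k^i$ starting at the root. (If $x$ itself has degree at most $2$, the tree is a path and the claim is trivial, since $n_k(T)\le 2\le D^k$ for $D\ge 2$; the case $D=1$ forces $n_k(T)\le 1$ by the same argument below.)

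To each branch $b_i$ we associate the word
\[
w_i = \bigl(c(v_1^i), c(v_2^i), \dots, c(v_k^i)\bigr).
\]
The root is excluded because it is shared by all branches. There are at most $D^k$ possible words.

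Suppose $m > D^k$. Then by pigeonhole there are $i \neq j$ with $w_i = w_j$. Define a map $\sigma$ on $V(T)$ by $v_\ell^i \mapsto v_\ell^j$ and $v_\ell^j \mapsto v_\ell^i$ for $1 \le \ell \le k$, fixing every other vertex. We check three properties of $\sigma$.
\begin{enumerate}[(i)]
\item $\sigma$ is an automorphism. Both branches are paths of the same length attached at $x$, so $\sigma$ maps $x v_1^i$ to $x v_1^j$ and each edge $v_\ell^i v_{\ell+1}^i$ to $v_\ell^j v_{\ell+1}^j$, and vice versa. All other edges are fixed.
\item $\sigma$ preserves $c$, because $w_i = w_j$.
\item $\sigma$ is nontrivial, because $k\ge 1$ means $v_1^i \neq v_1^j$ are swapped.
\end{enumerate}
This contradicts the assumption that $c$ is distinguishing. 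Therefore $n_k(T) \le D^k$ for every $k$.

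The only step needing care is confirming that swapping two equal-length legs is a genuine automorphism. This follows from the spider structure: the legs meet only at $x$ and contain no other high-degree vertices. The argument does not use any bound on $D$ beyond counting the colorings of $k$ non-root vertices.
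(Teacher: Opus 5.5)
Your proof is correct and is the same pigeonhole argument the paper gives: more than $D^k$ legs of length $k$ force two legs with identical colorings, and swapping them is a nontrivial color-preserving automorphism. You simply write out the check that this swap is an automorphism. The parenthetical case split on $\deg(x)\le 2$ is unnecessary, since the swap argument works in that case too.
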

This follows as if there were more than $D^{k}$ paths of length $k$ appended to $x$, in any $D$-coloring, two paths would have the same coloring. 

Given the two observations above, we obtain the following lemma.

\begin{lem}\label{lem:all full or all empty}
Let $T$ be a $D$-distinguishable spider. If $n_k(T) \neq 0$ for some $k$, then there is a spider $T'$ whose fixing density is at least as large as $T$, so that $n_{k}(T') \in \{0,D^{k}\}$ with $n_{i}(T')=n_{i}(T)$ for $i \neq k$.  
\end{lem}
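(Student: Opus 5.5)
The idea is to view the fixing density as a function of the single integer $m=n_k$, holding every other $n_i$ fixed. That function is a linear-fractional function of $m$ on $1\le m\le D^k$, so its maximum on this range is attained at an endpoint. The endpoint $m=1$ can then be traded for $m=0$.

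Fix $k$ with $n_k(T)\neq 0$ and set
\[
A=\sum_{i\neq k}\max\{0,n_i(T)-1\},\qquad B=1+\sum_{i\neq k} i\,n_i(T),
\]
so that $B\ge 1$. For a spider $T_m$ with $n_k(T_m)=m$ and $n_i(T_m)=n_i(T)$ for $i\neq k$, the formula for the fixing density given above reads
\[
f(m)=\frac{A+\max\{0,m-1\}}{B+km}.
\]
By the previous lemma, $1\le n_k(T)\le D^k$.

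\emph{Step 1.} On the integer interval $1\le m\le D^k$ we have $f(m)=\frac{(A-1)+m}{B+km}$. This is a ratio of two affine functions of $m$ whose denominator is positive throughout, so it is monotone on the interval: its derivative has the constant sign of $B-k(A-1)$. Hence
\[
f(n_k(T))\le \max\{f(1),f(D^k)\}.
\]
If the maximum is $f(D^k)$, take $T'=T_{D^k}$.

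\emph{Step 2.} Otherwise the maximum is $f(1)=A/(B+k)$, and we compare it with $m=0$. Here $f(0)=A/B\ge A/(B+k)$, because $A\ge 0$ and $B>0$. So $T'=T_0$ works, i.e. we delete all branches of length $k$. In both cases $n_k(T')\in\{0,D^k\}$, the other $n_i$ are unchanged, and the fixing density has not decreased.

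\emph{Step 3.} One should also check that $T'$ is again a $D$-distinguishable spider, so that the lemma can be iterated over all $k$. Adding or removing legs at $x$ keeps $T'$ a spider. For distinguishability, when $x$ has degree at least $3$ every automorphism fixes $x$ and can only permute legs of equal length. Since $n_i(T')\le D^i$ for every $i$, we can give the $n_i(T')$ legs of length $i$ pairwise distinct colorings from the $D^i$ available words of length $i$, and this coloring is distinguishing.

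The degenerate cases are where most of the care goes: $T'$ may be a path, or a single vertex when every leg is deleted. For these I would argue directly. Paths have $D\le 2$ and fixing number at most $1$, and the single vertex has density $0$, so the claim is immediate or trivial there. The main obstacle is therefore bookkeeping rather than the monotonicity argument, which is the heart of the proof and is elementary.
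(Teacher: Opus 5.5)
Your proof is correct and is essentially the paper's: the paper likewise observes that the derivative of the density with respect to $n_k$ has the $n_k$-independent sign of $1+\sum_{i\neq k} i n_i + k - k\sum_{i\neq k}\max\{0,n_i-1\}$ (your $B-k(A-1)$), moves $n_k$ up to $D^k$ or down to $1$ accordingly, and then passes from $1$ to $0$ because the numerator is unchanged while the denominator drops. Your Step 3, checking that $T'$ is still a $D$-distinguishable spider so the lemma can be iterated over all $k$, is left implicit in the paper. Your concern about the single-vertex case does not arise: when $n_i=0$ for all $i\neq k$ the sign is $1+k>0$, so Step 1 already selects $n_k=D^k$, as the paper remarks.
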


\begin{proof}
Fix $T$ and $k$ with $n_k = n_{k}(T) \neq 0$.  Then the fixing density of $T$ is 
\[
d(T) = \frac{n_k-1 + \sum_{i \neq k} \max\{0, n_i-1\}}{1 + k n_k + \sum_{i \neq k} i n_i}. 
\]

Then the derivative of the fixing density above with respect to $n_k$ is
\[
 \frac{(1 + kn_k + \sum_{i \neq k} i n_i) - k(n_{k}-1 + \sum_{i \neq k}\max\{0, n_i-1\})  }{(1 + kn_k + \sum_{i \neq k} i n_i)^2}. 
\]
Note that the sign of this derivative depends only on \[
1 + \sum_{i \neq k} in_i + k - k\sum_{i \neq k}\max\{0, n_i-1\}
\]
and is indepedent of $n_{k}$.  That is, if this sign is positive one gets a larger fixing density by increasing $n_k$ to $D^{k}$.  If the sign is negative one gets a larger fixing density by decreasing $n_k$ first to one (whence the formula breaks down) and then notes that decreasing $n_k$ from $1$ to $0$ clearly increases the fixing density as it decreases the denominator while leaving the numerator unchanged.  

As a final note we observe that if there is only one non-empty class (i.e. $n_i = 0$ for all $i \neq k$), then the derivative will be positive, so one can avoid obtaining a trivial tree.
\end{proof}

Lemma~\ref{lem:all full or all empty} allows us to conclude that if $T$ is a $D$-distinguishable spider, $D\geq 2$, with maximum fixing density, then $n_k(T) \in \{0,D^k\}$ for all $k$.


\section{Fixing densities of $D$-distinguishable trees} \label{sec:fixing densities}

In this section, we show that any $2$-distinguishable tree of order at least $3$ has fixing density at most $4/11$, and that any tree with distinguishing number $D \geq 3$ has fixing density at most $(D-1)/(D+1)$.
Further, both of these bounds are sharp. 

\begin{lem}\label{lem:4/11_spiders}
    If $T$ is a $2$-distinguishable tree of order at least $3$ with at most one high-degree vertex, then the fixing density of $T$ is at most $4/11$.
\end{lem}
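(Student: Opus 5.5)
We argue directly from the spider structure, using the bound $n_k(T) \le 2^k$ from the lemma on $D$-distinguishable spiders. Lemma~\ref{lem:all full or all empty} is not needed.

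\textbf{Setup.} Root $T$ at its unique high-degree vertex $x$. If $T$ is a path, root it at a central vertex instead. Write $n_k = n_k(T)$, so the fixing number is $\sum_k \max\{0,n_k-1\}$ and $|T| = 1+\sum_k k n_k$. For a path this formula still applies: it has at most two branches, and its fixing number is $1$ when the central vertex splits it into two equal branches and $0$ otherwise. Also, $n_k \le 2^k$ for every $k$ by the lemma on $D$-distinguishable spiders. The claim $F(T)/|T| \le 4/11$ is therefore equivalent to
\[
\sum_k \bigl(11\max\{0,n_k-1\} - 4kn_k\bigr) \le 4 .
\]
We bound each term of this sum separately.

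\textbf{Term-by-term bounds.}
\begin{itemize}
\item For $k \ge 3$ the term is at most $11(n_k-1) - 12 n_k < 0$ when $n_k \ge 1$, and it is $0$ when $n_k = 0$. So these terms never hurt.
\item For $k=1$ we have $n_1 \le 2$, and the term is at most $11 - 8 = 3$, attained at $n_1 = 2$.
\item For $k=2$ we have $n_2 \le 4$. The possible values are $0$, $-8$, $11-16$, $22-24$, $33-32$ for $n_2 = 0,\dots,4$, so the term is at most $1$, attained at $n_2 = 4$.
\end{itemize}
Summing, the left-hand side is at most $3+1 = 4$, which gives the inequality.

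\textbf{Sharpness and the order hypothesis.} Equality holds exactly when $n_1=2$, $n_2=4$ and $n_k=0$ for $k\ge3$. This is the spider with $2$ pendant edges and $4$ pendant paths of length $2$: it has $11$ vertices and fixing number $1+3=4$, so the bound is tight. The hypothesis that $|T|\ge 3$ only excludes the small cases where the formula is degenerate, such as $K_2$, which has fixing density $1/2$.

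There is no real obstacle in this argument. The only point needing care is checking that the constant term $4$ in the denominator is exactly absorbed by the two positive contributions, $3$ from $k=1$ and $1$ from $k=2$.
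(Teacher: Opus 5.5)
You have a genuine error in the path case. You claim that a path rooted at a central vertex still has fixing number $\sum_k\max\{0,n_k-1\}$, that is, $1$ when the two branches are equal and $0$ otherwise. This is false for paths of even order. Such a path has two central vertices, and the reversal swaps them, so the root is not fixed by every automorphism and the branch-counting formula no longer computes $F$. For instance, $P_4$ rooted at a central vertex has $n_1=n_2=1$, so the formula gives $0$, whereas $F(P_4)=1$. In fact $F(P_n)=1$ for every $n\ge 2$.

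A warning sign is that your argument never uses $|T|\ge 3$. Applied to $K_2$, where the formula also gives $0$, it would ``prove'' the bound, yet $F(K_2)/|K_2|=1/2>4/11$. So $K_2$ is not a separate degenerate case; it is the smallest instance of the same failure. The repair is what the paper does: handle paths directly via $F(P_n)/n=1/n\le 1/3<4/11$ for $n\ge 3$. This is exactly where the order hypothesis is needed.

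Once paths are excluded, your argument is correct and takes a different route from the paper's. The paper writes the density as $(W+X)/(Y+Z)$, splitting at $k\le 2$ versus $k\ge 3$. It bounds $X/Z<1/3$ and uses $\frac{W+X}{Y+Z}\le\max\{W/Y,X/Z\}$. It then appeals to Lemma~\ref{lem:all full or all empty} to reduce to $n_1\in\{0,2\}$ and $n_2\in\{0,4\}$, which strictly speaking requires passing to an extremal spider rather than arguing about $T$ itself.

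Your linear reformulation $\sum_k\bigl(11\max\{0,n_k-1\}-4kn_k\bigr)\le 4$ needs only $n_k\le 2^k$. It checks every admissible value of $n_1$ and $n_2$ directly and reads off the unique equality case immediately. It also adapts directly to $D\ge 3$ via the inequality $(D+1)F\le (D-1)|T|$: the $k=1$ term contributes at most $D-1$, and every term with $k\ge 2$ is negative.
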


\begin{proof}
Let $T$ be a $2$-distinguishable spider of order at least $3$; we may assume $T$ is not a path, as the fixing density of a path is $1/|T|$, which is strictly less than $4/11$ when $|T| \geq 3$.  

Write the fixing density 
\[
 \frac{\sum{\max\{0, n_k-1\}}}{1+\sum kn_k} = \frac{W+X}{Y+Z}  
\]
where \begin{align*} 
W &= \sum_{k=1}^{2} \max\{0,n_k-1\} &&X = \sum_{k\geq 3} \max\{0, n_k-1\}\\
Y&= 1+ \sum_{k=1}^{2} kn_k&& 
Z = \sum_{k\geq 3} kn_{k}. \end{align*}
Note that if $Z \neq 0$ 
\[\frac{X}{Z} = \frac{\sum_{k\geq 3} \max\{0, n_k-1\}}{ \sum_{k \geq 3} kn_{k}} < \frac{\sum_{k \geq 3}n_{k}}{3 \sum_{k \geq 3}n_k} = \frac{1}{3} < \frac{4}{11}.   \]

A simple real number inequality is that
\begin{equation} \label{ineq:fractions}
\frac{W+X}{Y+Z} \leq \max\left\{\frac{W}{Y}, \frac{X}{Z} \right\} 
\end{equation}
and hence it suffices to show $\frac{W}{Y} \leq  \frac{4}{11}$, as we have shown that either $\frac{X}{Z} < \frac{4}{11}$ or $X$ and $Z$ are both $0$. This follows by the fact that, per Lemma \ref{lem:all full or all empty}, $n_1 \in \{0,2\}$ while $n_2 \in \{0, 4\}$.  The ratio $\frac{4}{11}$ occurs exactly when $n_1 = 2$ and $n_2=4$. It is easy to verify that in the other cases, either the fixing density is less than $\frac{4}{11}$, or the tree has fewer than $3$ vertices.   
\end{proof} 

\begin{rmk}
    The unique $2$-distinguishable spider for which Lemma~\ref{lem:4/11_spiders} is sharp is the one depicted in Figure~\ref{fig:4/11}.
    A distinguishing $2$-coloring is shown in Subfigure~\ref{fig:distinguishing} and a minimum fixing set of four vertices is shown in Subfigure~\ref{fig:fixing}.
    
    To see that this is the only $2$-distinguishable spider of order at least $3$ and fixing density $4/11$, we need only note that the inequality~\eqref{ineq:fractions} is strict whenever $W/Y \neq X/Z$, that $X/Z < 4/11$ when the order $n$ of the spider is more than $11$, and that $W/Z < 4/11$ when $3 \leq n < 11$.
\end{rmk}

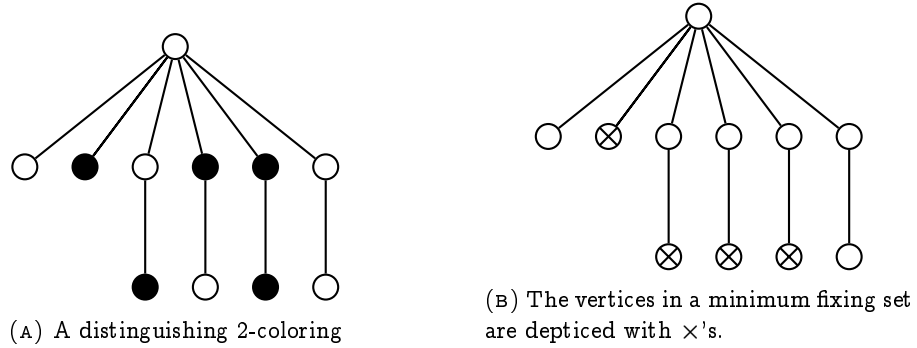
\begin{figure}
\begin{subfigure}{.45\textwidth}
    \centering
    \begin{tikzpicture}
        [scale=0.4, minimum size=.3cm,
        vertex/.style={circle,thick,draw}]
        
        \node[vertex] (1) at (6,8) [fill=white] {};
        \node[vertex] (2) at (1,4) [fill=white] {};
        \node[vertex] (3) at (3,4) [fill=black] {};
        \node[vertex] (4) at (5,4) [fill=white] {};
        \node[vertex] (5) at (5,0) [fill=black] {};
        \node[vertex] (6) at (7,4) [fill=black] {};
        \node[vertex] (7) at (7,0) [fill=white] {};
        \node[vertex] (8) at (9,4) [fill=black] {};
        \node[vertex] (9) at (9,0) [fill=black] {};
        \node[vertex] (10) at (11,4) [fill=white] {};
        \node[vertex] (11) at (11,0) [fill=white] {};
        
        \draw[thick] (2)--(1)--(3)--(1)--(4)--(5);
        \draw[thick] (7)--(6)--(1)--(8)--(9);
        \draw[thick] (1)--(10)--(11);
    \end{tikzpicture}
    \caption{A distinguishing $2$-coloring}
    \label{fig:distinguishing}
\end{subfigure}
\hfill
\begin{subfigure}{.45\textwidth}
    \centering
    \begin{tikzpicture}
        [scale=0.4, minimum size=.3cm,
        vertex/.style={circle,thick,draw},
        cross/.style={path picture={\draw[black] (path picture bounding box.south east) -- (path picture bounding box.north west) (path picture bounding box.south west) -- (path picture bounding box.north east);}}]
        \node[vertex] (1) at (6,8) [fill=white] {};
        \node[vertex] (2) at (1,4) [fill=white] {};
        \node[vertex,cross] (3) at (3,4) [fill=white] {};
        \node[vertex] (4) at (5,4) [fill=white] {};
        \node[vertex,cross] (5) at (5,0) [fill=white] {};
        \node[vertex] (6) at (7,4) [fill=white] {};
        \node[vertex,cross] (7) at (7,0) [fill=white] {};
        \node[vertex] (8) at (9,4) [fill=white] {};
        \node[vertex,cross] (9) at (9,0) [fill=white] {};
        \node[vertex] (10) at (11,4) [fill=white] {};
        \node[vertex] (11) at (11,0) [fill=white] {};
        
        \draw[thick] (2)--(1)--(3)--(1)--(4)--(5);
        \draw[thick] (7)--(6)--(1)--(8)--(9);
        \draw[thick] (1)--(10)--(11);
    \end{tikzpicture}
    \caption{The vertices in a minimum fixing set are depticed with $\boldsymbol{\times}$'s.}
    \label{fig:fixing}
\end{subfigure}
	\caption{A $2$-distinguishable tree with fixing density $4/11$.}
	\label{fig:4/11}
\end{figure}

We shall now prove that the upper bound of $4/11$ on the fixing density of a $2$-distinguishable spider of order at least $3$ holds for trees in general.
To do so, we define a {\em pendent path} in a graph to be a path with one endpoint of degree $1$, the other of degree at least $3$, and all interior vertices having degree $2$.
We say that a subset $F$ of vertices in a graph $G$ {\em fixes} a subgraph (or subset of vertices) $H$ if every automorphism of $G$ which fixes every vertex in $F$ also fixes every vertex in $H$.

\begin{thm}\label{thm:4/11}
	If $T$ is a $2$-distinguishable tree of order at least $3$, then the fixing density of $T$ is at most $4/11$, and this bound is sharp.
\end{thm}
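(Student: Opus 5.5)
Sharpness is immediate from the spider in Figure~\ref{fig:4/11}: it has $11$ vertices, is $2$-distinguishable, and has fixing number $4$. The upper bound I would prove by reducing from general trees to spiders, so that Lemma~\ref{lem:4/11_spiders} does the numerical work. By Theorem~\ref{thm: leaf fixing set} we may take a minimum fixing set consisting of leaves. I plan to decompose $T$ around its high-degree vertices. For each high-degree vertex $v$, let $S_v$ be $v$ together with all pendent paths ending at $v$; this is a spider centred at $v$. The sets $S_v\setminus\{v\}$ are pairwise disjoint, and every leaf of $T$ (when $T$ is not a path) lies on exactly one pendent path. If $T$ is a path, its fixing density is at most $1/3$, and if $T$ has at most one high-degree vertex we are done by Lemma~\ref{lem:4/11_spiders}.

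The central claim is local. Fix a high-degree vertex $v$. After the rest of the tree is fixed, the pendent paths at $v$ can only be permuted among themselves by paths of equal length. So, choosing for each length $k$ the leaves of all but one of the $n_k(S_v)$ pendent paths of length $k$, we fix these paths. The union of these choices over all $v$ fixes every pendent-path vertex. The remaining symmetry then lives on the subtree $T'$ obtained by deleting all pendent paths (keeping the high-degree vertices). $T'$ is a tree whose leaves are former high-degree vertices. I would fix $T'$ by induction, or more simply argue directly: once all pendent paths at a vertex $v$ are fixed, $v$ itself is fixed unless $S_v$ contributed nothing. Iterating this toward the centre, a fixing set of size $\sum_v \sum_k \max\{0,n_k(S_v)-1\}$ plus a small correction suffices.

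Restricting a distinguishing $2$-coloring of $T$ to $S_v$ distinguishes the pendent paths at $v$ from one another, so the lemma $n_k\le 2^k$ and the per-spider optimisation of Lemma~\ref{lem:all full or all empty} still apply to each $S_v$. That yields $\sum_k\max\{0,n_k-1\}\le \tfrac{4}{11}|S_v|$. This is essentially the computation in Lemma~\ref{lem:4/11_spiders}, but I must check that small spiders, including those where the spider is not itself of order at least $3$, still satisfy the bound. Note that $|S_v|\ge 3$ whenever $v$ has at least two pendent paths, and the contribution is $0$ otherwise. Summing over $v$, with the disjointness of the $S_v\setminus\{v\}$ and the centres counted once, gives $F(T)\le \tfrac{4}{11}n$, provided the correction term is absorbed.

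The main obstacle is the internal structure. Two high-degree vertices $u,w$ with isomorphic (up to colorings) hanging structures can be swapped even when each has no repeated pendent-path lengths. In that case the local count $0$ does not fix them, and some extra vertex is needed. I would handle this by induction on the number of high-degree vertices: contract or delete a deepest high-degree vertex $v$ (relative to a central root) together with its pendent paths. The result is a smaller $2$-distinguishable-compatible tree, and I must show that $F$ increases by at most $\tfrac{4}{11}$ times the number of removed vertices. When $v$'s branch below its parent contains at least $3$ vertices and has a sibling copy requiring one extra leaf, the extra cost $1$ is paid by $|S_v|\ge 3$ only if the $S_v$ budget has slack. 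The delicate case is therefore when $S_v$ attains $4/11$ exactly and is duplicated. Here I would first try using the $2$-coloring constraint: the number of isomorphic sibling branches is limited by the number of distinct $2$-colorings of that branch. This bounds the duplication cost by roughly $\log_2$ of the branch count, far below $\tfrac{4}{11}$ times the total size when branches have $11$ or more vertices.
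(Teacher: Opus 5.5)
Your reduction to spiders and the per-spider estimate $\sum_k\max\{0,n_k(S_v)-1\}\le\frac{4}{11}|S_v|$ for $|S_v|\ge 3$ are fine. The proof never closes, though: the ``correction term'' is the whole difficulty, and your last paragraph is a plan rather than an argument. Your sentence that the union of the local choices fixes every pendent-path vertex is false as stated. As you later note, two high-degree vertices whose legs all have distinct lengths can be swapped.

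The proposed induction also fails. Deleting a deepest $S_v\setminus\{v\}$ need not leave a $2$-distinguishable tree. For instance, $v$ becomes a leaf, and if its parent already had two leaf neighbours it now has three. So the inductive hypothesis is unavailable. The ``$\log_2$ of the branch count'' bound on duplication cost is also unjustified.

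The missing idea is that the budget has slack exactly where you need it. If $|S_v|\ge 3$ then $\frac{4}{11}|S_v|\ge\frac{12}{11}>1$, so you can always put at least one leaf of $S_v$ into the fixing set, even when the local count is $0$. A fixed leaf fixes its pendent path and hence $v$. Your local choices then fix all of $S_v$.

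So every leaf lying in a spider of order at least $3$ is fixed, at total cost at most $\frac{4}{11}\sum_v|S_v|\le\frac{4}{11}n$, since the $S_v$ are disjoint. Your ``delicate case'' of a tight, duplicated spider disappears: such a spider already contains chosen leaves, which pin its centre.

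The only spiders that cannot afford a leaf have order $2$: a single leaf $w$ at a high-degree $v$ with no other pendent path. For these, take a longest path $P$ through $v$. Suppose an endpoint were such a lone leaf $\ell$ at some $x$. Then $x$ has a neighbour $z\notin P$, and $z$ is not a leaf, since otherwise $xz$ would be a second pendent path at $x$. Replacing $\ell$ by $z$ and a further neighbour of $z$ gives a longer path through $v$, a contradiction.

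Hence both ends of $P$ are already fixed, so $v$ is fixed, and so is $w$, its unique leaf neighbour. An automorphism of a tree fixing every leaf is trivial. This is the paper's argument, and it needs no induction on high-degree vertices.
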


\begin{proof}

	Let $U$ denote the set of vertices in $T$ which are the high-degree endpoints of at least one pendent path in $T$.
	For each $u \in U$, let $S_u$ denote the subgraph consisting of $u$ and all pendent paths in $T$ extending from $u$.
	We note that every leaf of $T$ is an endpoint of a pendent path (possibly of length $1$), and every pendent path in $T$ is contained in $S_u$ for some $u \in U$.
	Thus, while it is not always true that every vertex of $T$ is contained in some $S_u$, every leaf of $T$ is contained in some $S_u$. 

	We first claim that $S_u$ is $2$-distinguishable for every $u \in U$.
    If $\deg_{S_u}(u) \in \{1,2\}$, then $S_u$ is a path and we are done. On the other hand, suppose that $\deg_{S_u}(u) \geq 3$ and, for a contradiction, that $S_u$ is not $2$-distinguishable.
	Let $c$ be a distinguishing $2$-coloring of $T$, and let $c_u$ denote the restriction of $c$ to $S_u$.
    By supposition, there is a nontrivial automorphism $\phi_u$ of $S_u$ which preserves $c_u$.
    Since $u$ is the only vertex whose degree in $S_u$ is at least $3$, $\phi_u$ fixes $u$.
    This allows $\phi_u$ to be extended to a nontrivial automorphism $\phi$ of $T$ by defining $\phi(v) = v$ for all vertices $v$ in $T - S_u$.
    But now $\phi$ preserves the coloring $c$, which contradicts that $c$ was a distinguishing coloring of $T$.

    Theorem~\ref{thm: leaf fixing set} states that every tree has a minimum fixing set containing only leaves.
    For any subgraph $S_u$ of order at least $3$, $F(S_u) \leq 4 |S_u| / 11$ by Lemma~\ref{lem:4/11_spiders}.
    Since $4 |S_u| / 11 > 1$ when $|S_u| \geq 3$, we can find a fixing set $F_u$ for each such $S_u$ which contains at least one and at most $4/11$ of its vertices. Note that in the case where an $S_u$ is asymmetric, we will still select a leaf in $S_u$ to be the corresponding $F_u$.
    Further, every vertex in $F_u$ can be taken to be a leaf of $T$ (if $u$ is not a leaf in $S_u$, then the previously mentioned result assures us this is possible; otherwise $S_u$ is a path with endpoints $u$ and $w$, say, in which case $\{w\}$ is a fixing set for $S_u$).
	For the subgraphs $S_u$ of order $2$, we let $F_u$ be empty.
    
    We claim that the union $F$ of the $F_u$ for $u \in U$ fixes all of the leaves of $T$, and thus is a fixing set for $T$.
    Since the $S_u$ of order at least $3$ each have at least one vertex in $F$, and since $F_u \subseteq F$, no automorphism which preserves $F$ pointwise can permute leaves between or within these spiders.
    Thus, $F$ fixes all the leaves of $T$ which are not contained in a spider $S_u$ of order $2$.
    
    Any $S_u$ of order $2$ consists of a single leaf $w$ (a {\em lonesome dangler}) adjacent to $u$ in $T$.
    We claim that, if $P$ is a longest path in $T$ containing $u$, then the endpoints of $P$ lie in subgraphs $S_x, S_y$ for $x,y \in U$ with $|S_x|$ and $|S_y|$ at least $3$.
    As $S_x$ and $S_y$ are fixed by $F$, so shall be $P$, and thus $w$ as well.
    Clearly, the endpoints of $P$ are leaves, and thus contained in some spiders $S_x$ and $S_y$.
    For a contradiction, suppose that an endpoint of $P$ is a lonesome dangler $\ell$ contained in, say, $S_x$.
    Note that $\deg_T(x) \geq 3$ by definition of $U$, and that $\ell$ is the only leaf neighbor of $x$.
    Let $z$ be a neighbor of $x$ which is not contained in $P$.
    Then $P' := P - x\ell \cup xz$ is a path, and $z$ is not a leaf, so we can extend $P'$ to a longer path by adding a neighbor of $z$, contradicting that $P$ was a longest path containing $u$.
    This proves the claim, and thus we have that $F$ is a fixing set for $T$. 
    Further,
	\[|F| = \sum_{u \in U} |F_u| \leq \frac{4}{11}\sum_{u \in U} |S_u| \leq \frac{4}{11} |T|.\]

    Note that the argument above suggests an infinite family of sharpness examples: those trees $T$ for which $\{ S_u : u \in U\}$ is a partition of $V(T)$ and for which every $S_u$ is isomorphic to the spider depicted in Figure~\ref{fig:4/11}.
    One can obtain such a tree $T$ from a path $P$ on $k$ vertices by appending to each vertex a pendent spider isomorphic to the one in Figure~\ref{fig:4/11}.
    A distinguishing $2$-coloring of $P$ along with distinguishing $2$-colorings of the remaining vertices on each pendent spider provides a distinguishing $2$-coloring of $T$.
    Further, the fixing density of $T$ is precisely $4/11$.
\end{proof}

In a similar fashion, we now show that if $T$ is a tree with distinguishing number $D \geq 3$, then the fixing density of $T$ is at most $\frac{D-1}{D+1}$. We begin by showing this is true for a tree with exactly one-high degree vertex, before showing it is true for all $D$-distinguishable trees. 

\begin{lem}\label{lem:d>=3}
    If $T$ is a $D$-distinguishable tree, $D \geq 3$, with exactly one high-degree vertex, then the fixing density of $T$ is at most $\frac{D-1}{D+1}$.
\end{lem}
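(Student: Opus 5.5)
We mirror the proof of Lemma~\ref{lem:4/11_spiders}. Let $T$ be a $D$-distinguishable spider, $D\ge 3$, rooted at its unique high-degree vertex $x$, and write $n_k=n_k(T)$. By the earlier lemma $n_k\le D^k$. The fixing density is
\[
d(T)=\frac{\sum_k \max\{0,n_k-1\}}{1+\sum_k k n_k},
\]
and we aim to show $d(T)\le \frac{D-1}{D+1}$. By Lemma~\ref{lem:all full or all empty}, it suffices to treat extremal spiders with $n_k\in\{0,D^k\}$ for every $k$. If only one class is nonempty we apply the lemma carefully, as in its final remark; if $\deg(x)\ge 3$ is lost in this reduction the density only gets compared with a larger value, which is harmless for an upper bound.

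\emph{Step 1 (split off length-one branches).} Write the density as $\frac{W+X}{Y+Z}$, where
\[
W=\max\{0,n_1-1\},\quad Y=1+n_1,\quad X=\sum_{k\ge2}\max\{0,n_k-1\},\quad Z=\sum_{k\ge 2}kn_k .
\]
If $Z\neq 0$, then $X/Z<\sum_{k\ge2} n_k\big/\bigl(2\sum_{k\ge2} n_k\bigr)=1/2$. Since $D\ge3$ gives $\frac{D-1}{D+1}\ge \frac12$, this yields $X/Z<\frac{D-1}{D+1}$. For the remaining term, $n_1\le D$ gives
\[
\frac{W}{Y}\le \frac{D-1}{D+1},
\]
with equality at $n_1=D$; the bound also holds trivially when $n_1=0$. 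Inequality~\eqref{ineq:fractions} then gives $d(T)\le\max\{W/Y,X/Z\}\le\frac{D-1}{D+1}$.

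\emph{Step 2 (degenerate cases).} If $W=Y=0$ (impossible, since $Y\ge1$) or $X=Z=0$, the bound follows directly from the term that survives. So the reduction to extremal spiders is not even strictly necessary: the bounds $n_1\le D$ and $n_k\ge 0$ suffice. Note that $W/Y$ is increasing in $n_1$, so its maximum is attained at $n_1=D$.

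The main point to check is the threshold $D\ge 3$: for $D=2$ we have $\frac{D-1}{D+1}=\frac13<\frac12$, so the crude bound on $X/Z$ fails. That is why the $D=2$ case required the finer split at $k\le 2$. For $D\ge3$ the proof is essentially routine, and sharpness comes from the star $K_{1,D}$, which has fixing density exactly $\frac{D-1}{D+1}$.
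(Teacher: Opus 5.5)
Your proof is correct and follows essentially the same route as the paper's: you split off the length-one branches, bound $X/Z<1/2\le\frac{D-1}{D+1}$ using $D\ge 3$, apply the mediant inequality, and bound $W/Y$ using $n_1\le D$. As your Step~2 already observes, the appeal to Lemma~\ref{lem:all full or all empty} in your first paragraph is unnecessary (the paper does not use it here either), so that paragraph's somewhat unclear remark about losing $\deg(x)\ge 3$ can simply be dropped.
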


\begin{proof}

    Let $T$ be a $D$-distinguishable spider with one high degree vertex $x$. We note that the proof follows analogously to the proof of Lemma~\ref{lem:4/11_spiders}. Recall that by $n_k$, we denote the number of paths of $k$ vertices, not including $x$, appended to $x$. We can write the fixing density

    \[
 \frac{\sum{\max\{0, n_k-1\}}}{1+\sum kn_k} = \frac{W+X}{Y+Z}  
\]
where \begin{align*} 
W &=  \max\{0,n_1-1\} &&X = \sum_{k\geq 2} \max\{0, n_k-1\}\\
Y&= 1+  kn_1&& 
Z = \sum_{k\geq 2} kn_{k}. \end{align*}

Note that if $Z \neq 0$ and when $D \geq 3$
\[\frac{X}{Z} = \frac{\sum_{k\geq 2} \max\{0, n_k-1\}}{ \sum_{k \geq 2} kn_{k}} < \frac{\sum_{k \geq 2}n_{k}}{2 \sum_{k \geq 2}n_k} = \frac{1}{2} \leq \frac{D-1}{D+1}.   
\]

Again, we note that \[
\frac{W+X}{Y+Z} \leq \max\left\{\frac{W}{Y}, \frac{X}{Z} \right\} 
\] and observe that we have shown $ \frac XZ \leq \frac{D-1}{D+1}$ or $X$ and $Z$ are both 0. Thus, it suffices to show $\frac WY \leq \frac{D-1}{D+1}$. Of course, this inequality is tight precisely when $n_1 = D$ and is strict when $n_1 < D$.

\end{proof}

We now prove an extension of Theorem~\ref{thm:4/11} for trees with distinguishing number at least $3$.

\begin{thm}\label{thm:D-1/D+1}
    If $T$ is a tree with distinguishing number $D$ greater than or equal to 3, then the fixing density of $T$ is at most $\frac{D-1}{D+1}$, and this bound is sharp.
\end{thm}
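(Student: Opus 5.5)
The plan is to mirror the proof of Theorem~\ref{thm:4/11} almost verbatim, replacing Lemma~\ref{lem:4/11_spiders} by Lemma~\ref{lem:d>=3}. As there, let $U$ be the set of high-degree endpoints of pendent paths and, for $u\in U$, let $S_u$ consist of $u$ together with all pendent paths at $u$. Fix a distinguishing $D$-coloring $c$ of $T$. The same extension argument shows each $S_u$ is $D$-distinguishable. If some automorphism of $S_u$ preserved $c|_{S_u}$, it would fix $u$, the unique vertex of degree at least $3$ in $S_u$. Extending it by the identity on $T-S_u$ would then give a nontrivial color-preserving automorphism of $T$, which is impossible.

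Next I bound each piece. For $S_u$ with $|S_u|\ge 3$ that is a genuine spider, Lemma~\ref{lem:d>=3} gives $F(S_u)\le \frac{D-1}{D+1}|S_u|$. If $S_u$ is a path, one leaf suffices. Since $D\ge 3$ gives $\frac{D-1}{D+1}|S_u|\ge \frac12\cdot 3>1$, I can choose a fixing set $F_u$ of $S_u$ consisting of leaves of $T$ with $1\le |F_u|\le \frac{D-1}{D+1}|S_u|$, invoking Theorem~\ref{thm: leaf fixing set} exactly as before. For $|S_u|=2$ (lonesome danglers) I take $F_u=\emptyset$. The argument that $F=\bigcup F_u$ fixes all leaves goes through unchanged, since it used nothing about the number of colors. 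Every nonempty $F_u$ pins its spider and the leaves in it. A lonesome dangler at $u$ is fixed because the endpoints of a longest path through $u$ lie in spiders of order at least $3$. Summing gives $|F|\le \frac{D-1}{D+1}\sum|S_u|\le\frac{D-1}{D+1}|T|$, because the $S_u$ are disjoint.

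For sharpness, I take the star $K_{1,D}$. It has $D$ leaves, distinguishing number exactly $D$, fixing number $D-1$, and order $D+1$, so its fixing density is $\frac{D-1}{D+1}$. To get an infinite family, I attach a copy of $D$ pendent leaves to every vertex of a path $P_k$ with $k\ge 2$. The resulting tree has $D$ leaves at each path vertex. It is $D$-distinguishable: color the leaves at each path vertex with distinct colors, and the path vertices so as to break the reflection. Since the path ends become high-degree, the reflection is broken once the leaf colorings are chosen asymmetrically. Its fixing number is $k(D-1)$ and its order is $k(D+1)$. I must also check that the distinguishing number is exactly $D$, not less. This holds because $D$ twin leaves force $D$ colors.

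The main obstacle is making sure Lemma~\ref{lem:d>=3}, which is stated for spiders with exactly one high-degree vertex, is applied only where valid. Paths $S_u$ (when $\deg_{S_u}(u)\le 2$) must be handled separately, using the leaf bound $1\le\frac{D-1}{D+1}|S_u|$. I also need to check the small case $|S_u|=3$, where $\frac{D-1}{D+1}\cdot 3\ge 1.5\ge 1$ holds. Everything else is a direct transcription of the earlier proof.
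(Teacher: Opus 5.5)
Your proposal is correct and follows the paper's proof essentially step for step: the same decomposition into the pieces $S_u$, Lemma~\ref{lem:d>=3} on each genuine spider, a single leaf for path pieces (the paper singles out order $3$), empty $F_u$ for lonesome danglers, and the same sharpness family, of which your $K_{1,D}$ is the case $k=1$. One small correction in the sharpness check: each path vertex carries exactly $D$ leaves, which must use all $D$ colors, so leaf colorings can never break the reflection---it is the non-palindromic coloring of the path vertices (which you also mention) that does.
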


\begin{proof}

The proof follows analogously to the proof of Theorem~\ref{thm:4/11} by observing that if $T$ is $D$-distinguishable, then each $S_u$ is $D$-distinguishable. By Lemmas~\ref{lem:4/11_spiders} and \ref{lem:d>=3}, we can fix any $S_u$ of order greater than or equal to 4 with $\frac{D-1}{D+1}|S_u|$ vertices, which we can select to be leaves. Let $F_u$ denote the fixing set of each respective $S_u$. As in the proof of Theorem~\ref{thm:4/11}, if any $S_u$ is asymmetric, we select a leaf in $S_u$ to add to the corresponding $F_u$ and observe that every $F_u$ contains at least one and at most $\frac{D-1}{D+1}|S_u|$ vertices. 

Any $S_u$ of order 3 is a path on 3 vertices with either 1 or 2 leaves in $T$. Let $\ell$ be a leaf in such an $S_u$ and add $\ell$ to $F_u$. Observe that $|F_u| = 1 = \frac{1}{3}|S_u| \leq \frac{D-1}{D+1}|S_u|$. 

Now let $F$ be the union of all the $F_u$. By the argument in the proof of Theorem~\ref{thm:4/11}, any $S_u$ of order 2 consists of a single leaf $w$ adjacent to a $u$ in $T$ and is fixed by $F$. Thus, $F$ is a fixing set for $T$ and we have that

\[|F| = \sum_{u \in U} |F_u| \leq \frac{D-1}{D+1}\sum_{u \in U} |S_u| \leq \frac{D-1}{D+1} |T|.\]

To see that the bound is sharp, let $k$ be a positive integer, and let $T_k$ be the tree obtained 
from a path on $k$ vertices by attaching $D$ leaves to each vertex. Then $T_k$ has $k(D+1)$
vertices, distinguishing number $D$ and fixing number $k(D-1)$. Hence  
$F(T_k) = \frac{D-1}{D+1}|T_k|$. 
\end{proof}

We conclude this section by noting that these bounds on the fixing density of a $D$-distinguishable tree are not necessarily true for $D$-distinguishable graphs in general. In particular, the graph $K_4$ with a pendent edge attached to each vertex is 2-distinguishable, but has fixing density greater than $4/11$, and any complete graph $K_n$ with $n \geq 3$ is $n$-distinguishable, but has fixing density greater than $\frac{n-1}{n+1}$. However, it would be of interest to identify classes of graphs for which sharper bounds can be obtained.
We return to this question in Section~\ref{sec: future work}.


\section{Universal $D$-distinguishable trees}\label{sec:universal trees}

A graph $G$ is \emph{universal} for a class of graphs $\mathcal{C}$ if $G$ contains every graph in $\mathcal{C}$ as a subgraph, up to isomorphism. The study of universal graphs was introduced by Rado in 1964~\cite{rado1964universal} and has grown significantly over the past 60 years.

Alikhani and Soltani gave a characterization of all trees with radius at most $3$ and distinguishing number $2$, as well as a necessary condition for trees with distinguishing number $2$ and radius more than $3$~\cite{AS2016a}. In this section, we provide a characterization of all $D$-distinguishable trees with radius $r$ ($D \geq 2$, $r \geq 1$) by establishing the existence of a universal tree. 

Recall that the $t$-paint cost of a graph $G$, $\rho^t(G)$, for $t \geq D(G)$, is the minimum size of the complement of a largest color class over all $t$-distinguishing colorings of $G$. In~\cite{B2023}, Boutin posed the open question to characterize all graphs for which $\rho^D(G) = F(G)$. Towards answering this question, we also provide a characterization of all $D$-distinguishable trees $T$ with radius $r$ ($D \geq 2$, $r \geq 1$) such that $\rho^D(T) = F(T)$, by establishing the existence of a universal tree. 

Our main contribution then is the construction, for each radius $r \geq 1$ and $D \geq 2,$ of a rooted tree $T_r^D$ which is universal. It is universal in the sense that it contains every $D$-distinguishable tree of radius $r$ as a branched subgraph, and furthermore every branched subgraph of $T_r^D$ of radius $r \geq 2$ is, indeed, $D$-distinguishable. (Recall that a branched subgraph is a union of branches or the singleton subgraph consisting of the root.)

\begin{thm}\label{thm:universal d-distinguishing trees}    
    Let $D$ and $r$ be positive integers with $D \geq 2$.
    There exists a rooted tree $T_r^D$ which contains all $D$-distinguishable trees of radius at most $r$ as branched subgraphs.
    Further, every branched subgraph of $T_r^D$ having radius $r \geq 2$ is $D$-distinguishable.
\end{thm}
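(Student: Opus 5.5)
The plan is to build $T_r^D$ recursively as a "rooted universal" tree, using the fact that rooting at a central vertex makes automorphisms essentially root-preserving. For a rooted tree $(R,x)$, call it \emph{rooted $D$-distinguishable} if some $D$-coloring is preserved by no nontrivial automorphism fixing $x$. The key observation is a recursive criterion. Let $R_1,\dots,R_m$ be the subtrees hanging from the children of $x$, each rooted at its child. Then a $D$-coloring of $R$ is root-distinguishing if and only if two conditions hold: each $R_i$ is root-distinguished by its restriction, and no two $R_i,R_j$ are isomorphic as colored rooted trees. For each isomorphism type $\tau$ of rooted $D$-distinguishable tree, let $m(\tau)$ be the number of distinguishing $D$-colorings of $\tau$ up to root-preserving automorphism; these are the "colored types". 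Then $(R,x)$ is rooted $D$-distinguishable exactly when, for every $\tau$, at most $m(\tau)$ of the $R_i$ have type $\tau$, and each $R_i$ is itself rooted $D$-distinguishable.

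Using this criterion, define $U_0$ to be a single vertex. For $h\ge 1$, define $U_h$ to be a root joined to $m(\tau)$ disjoint copies of each type $\tau$ of rooted $D$-distinguishable tree of height at most $h-1$; this is a finite list. Set $T_r^D=U_r$. Branches of $U_r$ correspond to these child copies, so a branched subgraph is the root together with a sub-multiset of child types in which each $\tau$ has multiplicity at most $m(\tau)$. By the criterion, such a subgraph is rooted $D$-distinguishable: color the copies of each $\tau$ with distinct colored types, and give the root an arbitrary color. Conversely, any rooted $D$-distinguishable tree of height at most $r$ has child types of height at most $r-1$ with admissible multiplicities, so it is isomorphic (rooted) to a branched subgraph of $U_r$.

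To pass to unrooted trees, let $T$ have radius $h\le r$ and root it at a central vertex $c$; then its height is $h$. Every automorphism of $T$ preserves the center, so any distinguishing $D$-coloring of $T$ is also root-distinguishing. Hence $(T,c)$ is rooted $D$-distinguishable of height at most $r$, and is therefore a branched subgraph of $T_r^D$. This proves the first part.

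For the second part, take a branched subgraph $T$ of radius $r\ge 2$ and give it a root-distinguishing coloring as above. If $T$ has even diameter, $c$ is its unique center, every automorphism fixes $c$, and we are done. The main obstacle is odd diameter $2r-1$. Then $T$ has two centers $c$ and $c'$ (with $c'$ a child of the root), and a nontrivial automorphism may swap them, which the rooted analysis does not control. The fix is that the color of the root was left completely free. We recolor $c$ with a color different from that of $c'$, which is possible since $D\ge 2$. Any automorphism either fixes $c$, and so is trivial by the rooted argument, or swaps $c$ and $c'$, and so does not preserve colors. The point to verify carefully is that recoloring the root does not destroy the child-level distinctness; it does not, because the root lies in none of the child subtrees $R_i$. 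Finally, I would check that the hypothesis $r\ge 2$ is exactly what excludes degenerate small cases, such as $K_2$, where the two centers are the whole tree and the color freedom argument is not available in the required form.
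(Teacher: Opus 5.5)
Your proof is correct and follows the paper's route. Your $U_r$ is exactly the paper's $T_r^D$: a new root carrying the maximal admissible number of copies of each smaller rooted type. Your second part (root at a center, and in the bicentral case recolor the root away from the color of the other center) is the paper's argument.

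Your explicit use of root-distinguishability and height is in fact tighter than the paper's induction. The paper asserts that each component of $S-y$ is itself a $D$-distinguishable tree of radius at most $r-1$. That is false in general: a $D$-distinguishable $S$ can have a component $K_{1,D+1}$ attached to $y$ through one of its leaves.

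Your closing worry about $r\ge 2$ is unfounded. The recoloring argument handles $K_2$ without difficulty. The radius-$r$ hypothesis is used only to make the root a center, since the root's eccentricity is at most $r$, and $r \ge 2$ itself plays no role.
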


\begin{proof}

    Let $D \geq 2$ be fixed.
    We begin by constructing $T_r^D$, the universal tree of radius $r \geq 1$ and distinguishing number $D$, by recursion on $r$. We shall let $x$, or $x_r^D$, denote the root of $T_r^D$.
    Along the way, we prove that $T_r^D$ contains all $D$-distinguishable trees of radius at most $r$ as branched subgraphs.

    For $r = 1$, we let $T_1^D = K_{1,D}$, where the root $x$ is the vertex of degree $D$.
    Note that the only $D$-distinguishable trees with radius $1$ are stars $K_{1,t}$ with $1 \leq t \leq D$, and these are all branched subgraphs of $K_{1,D}$.
    
    For $r \geq 2$, we construct the universal tree $T_r^D$ recursively from the universal tree $T_{r-1}^{D}$.
    We begin by taking all possible branched subgraphs of $T_{r-1}^{D}$, including the trivial ones consisting only of $x_{r-1}^D$. Copies of these subgraphs will be the components of $T_r^D - x$. We form $T_r^D$ by attaching as many copies of each branch as possible to a new root vertex $x$ by adding edges between $x$ and the copies of $x_{r-1}^D$, while maintaining $D$-distinguishability. (See Example~\ref{tdr_ex} and Figure~\ref{figure:D-distiguishable Universal} for the construction of $T_2^D$.) 

    We claim this construction yields a $D$-distinguishable tree $T_r^D$ of radius $r$
    which contains every $D$-distinguishable tree of radius at most $r$ as a branched subgraph.
    The claim is clearly true for $r = 1$.
    Supposing that it is true for $r - 1$, where $r \geq 2$, let $S$ be a $D$-distinguishable tree of radius at most $r$.
    If $y$ is a central vertex of $S$, then each component of $S - y$ is a $D$-distinguishable tree of radius at most $r - 1$, and is thus a branched subgraph of $T_{r-1}^D$ by induction.
    Noting that $T_r^D$ is constructed by taking as many copies of each branched subgraph of $T_{r-1}^D$ as possible while retaining $D$-distinguishability, were $S$ not to be a branched subgraph of $T_r^D$ (with $y$ mapped to the root $x_r^D$), $S$ would not be $D$-distinguishable, a contradiction.

    We now prove that, if $T$ is a branched subgraph of $T_r^D$ with radius $r \geq 2$, then $T$ is $D$-distinguishable.
    Since $T$ has radius $r$, it contains distinct branches $b_i, b_j$ such that the eccentricity of $x$ in $b_i$ is $r$, and the eccentricity of $x$ in $b_j$ is either $r$ or $r-1$, depending on whether the center of $T$ contains one or two vertices.
    In the former case, $x$ is the unique central vertex of $T$, so it is fixed by every automorphism.
    It then suffices to give each branch a distinguishing $D$-coloring to distinguish $T$, which is provided by the natural distinguishing $D$-coloring of $T_r^D$.
    In the latter case, $x$ is one of two central vertices in $T$, and the other central vertex $y$ is the neighbor of $x$ in $b_i$.
    We again assign each branch a $D$-distinguishing coloring, and we check on the color of $y$.
    By assigning $x$ any other color, we ensure that $x$ and $y$ are distinguished.
    Thus, we have a distinguishing coloring of $T$ using at most $D$ colors, which completes the proof.
\end{proof}

To better illustrate the construction of the universal tree for a given radius $r \geq 1$ and distinguishing number $D \geq 2$, we provide the following example of the universal trees of radius 2, depicted in Figure~\ref{figure:D-distiguishable Universal}.

\begin{ex}\label{tdr_ex}

Let $T_r^D$ denote the universal tree with root $x_r^D$, radius $r \geq 1$, and distinguishing number $D \geq 2$.

For radius $r = 2$,
we construct the universal tree $T_2^D$ from $T_1^D$. Recall that, for $r = 1$, the universal tree is the star $K_{1,D}$ with root having degree $D$. The universal tree $T_2^D$ is formed by taking all possible branched subgraphs of $K_{1,D}$
and attaching as many copies of each branched subgraph as possible (while maintaining $D$-distinguishability) to a new vertex $x_2^D$ via an edge from the copy of $x_1^D$.

In particular, $T_1^D$ has two types of branched subgraphs:
singletons ($x_1^D$) and star graphs $K_{1,i}$, $1 \leq i \leq D$, rooted at $x_1^D$.
Each of these types may be repeated in the following way:
\begin{enumerate}
    \item For singletons: $D$ different colors are possible, so $D$ singletons are possible without repeating a color on one of the vertices, which would permit an automorphism.
    
    \item For $K_{1,i}$ with $1\leq i\leq D$: 
    There are two ways $K_{1,i}$ can be distinguished: either every vertex is colored differently, or one non-root vertex shares a color with the root $x_1^D$. If every vertex has a different color, then there are $D$ choices for the color of the root and $\binom{D-1}{i}$ choices for the other $i$ vertices. If one vertex shares a color with the root, then there are $D$ choices for the shared color and $\binom{D-1}{i-1}$ choices for the remaining $i-1$ vertices. Thus, there are $D\binom{D-1}{i-1}+D\binom{D-1}{i} = D \binom{D}{i}$ distinct colorings of the $K_{1,i}$ possible without allowing an automorphism between stars.
\end{enumerate}

\begin{figure}[!h]
    \centering
    \begin{subfigure}{\textwidth}
        \centering
        \begin{tikzpicture}
        [every node/.style={circle, thick, draw=black!100, fill=none, minimum size=.3cm}]
    
        \node (r) at (0,2) {};
        \node (1) at (-2,1) {};
        \node (2) at (0,1) {};
        \node (3) at (2,1) {};
        \node (4) at (0,0) {};
        \node (5) at (1.25,0) {};
        \node (6) at (2.75,0) {};
    
        \foreach \i in {1,2,3}
        {
        \draw[thick] (r) -- (\i);
        }
        \draw[thick] (2) -- (4);
        \draw[thick] (5) -- (3) -- (6);
        \draw[thick] (3) -- (2,.5);
    
        \draw (2,.7) arc (270:310:.5);
        \draw (2,.7) arc (270:230:.5);
        \node[rectangle, draw=none, fill=none] (ibounds) at (3.4,.75) {\small $i, 2 \leq i \leq D$};
    
        \node[rectangle, draw=none, fill=none] (dots) at (2,0) {$\dots$};
    
        \node[rectangle, draw=none, fill=none] (brace1) at (-2,.6) {$\smallunderbrace{\hspace{.25cm}}$};
    
        \node[draw=none,fill=none] (exp1) at (-2,.25) {\small $D$};
    
        \node[rectangle, draw=none, fill=none] (brace2) at (0,-.4) {$\smallunderbrace{\hspace{.25cm}}$};
    
        \node[rectangle, draw=none,fill=none] (exp2) at (0,-.75) {\small $D^2$};
    
        \node[rectangle, draw=none, fill=none] (brace3) at (2,-.4) {$\underbrace{\hspace{2cm}}$};
    
        \node[rectangle, draw=none,fill=none] (exp3) at (2,-.8) {\small 
        $D \binom{D}{i}$};
    
        \end{tikzpicture}
        \caption{$T_2^D$}
        \label{subfig:D-distinguishable Universal a}
    \end{subfigure}
    
    \bigskip
    
    \begin{subfigure}{\textwidth}
        \centering
        \begin{tikzpicture}
        [every node/.style={circle, semithick, draw=black!100, fill=none, minimum size=3pt, inner sep=0pt}, scale=.25]
    
        \node (r) at (19,0) {};
        \foreach \i in {0,3,6,9}
        {
        \node (a\i) at (\i, -6) {};
        \draw (a\i) -- (r);
        }

        \node (r) at (19,0) {};
        \foreach \i in {1,4,7,10}
        {
        \node[draw=black!50, fill=black!50] (a\i) at (\i, -6) {};
        \draw (a\i) -- (r);
        }

        \node (r) at (19,0) {};
        \foreach \i in {2,5,8,11}
        {
        \node[fill=black!100] (a\i) at (\i, -6) {};
        \draw (a\i) -- (r);
        }
    
        \foreach \i in {12,15,18}
        {
        \node (a\i) at (2*\i - 11, -6) {};
        \draw (a\i) -- (r);
        }

        \foreach \i in {13,16,19}
        {
        \node[draw=black!50, fill=black!50] (a\i) at (2*\i - 11, -6) {};
        \draw (a\i) -- (r);
        }

        \foreach \i in {14,17,20}
        {
        \node[fill=black] (a\i) at (2*\i - 11, -6) {};
        \draw (a\i) -- (r);
        }

        \foreach \i in {12,13,14}
        {
        \node (b\i1) at (2*\i - 11.5, -10) {};
        \node[draw=black!50, fill=black!50] (b\i2) at (2*\i - 10.5, -10) {};
        }

        \foreach \i in {15,16,17}
        {
        \node (b\i1) at (2*\i - 11.5, -10) {};
        \node[fill=black] (b\i2) at (2*\i - 10.5, -10) {};
        }

        \foreach \i in {18,19,20}
        {
        \node[draw=black!50, fill=black!50] (b\i1) at (2*\i - 11.5, -10) {};
        \node[fill=black] (b\i2) at (2*\i - 10.5, -10) {};
        }

        \foreach \i in {12,...,20}
        {
        \draw (b\i1) -- (a\i) -- (b\i2);
        }

        \node (a21) at (3*21 - 31, -6) {};
        \node[fill=black!50, draw=black!50] (a22) at (3*22 - 31, -6) {};
        \node[fill=black] (a23) at (3*23 - 31, -6) {};

        \foreach \i in {21,22,23}
        {
        \draw (a\i) -- (r);
        \node (b\i1) at (3*\i - 32, -10) {};
        \node[draw=black!50, fill=black!50] (b\i2) at (3*\i - 31, -10) {};
        \node[fill=black] (b\i3) at (3*\i - 30, -10) {};
        \draw (b\i1) -- (a\i) -- (b\i2);
        \draw (b\i3) -- (a\i);
        }
        
        \foreach \i in {3,4,5}
        {
        \node (b\i) at (\i, -10) {};
        \draw (b\i) -- (a\i);
        }
        
        \foreach \i in {6,7,8}
        {
        \node[draw=black!50, fill=black!50] (b\i) at (\i, -10) {};
        \draw (b\i) -- (a\i);
        }

        \foreach \i in {9,10,11}
        {
        \node[fill=black] (b\i) at (\i, -10) {};
        \draw (b\i) -- (a\i);
        }
        
        \end{tikzpicture}
        \caption{$T_2^3$}
        \label{figure:D-distinguishable Universal b}
    \end{subfigure}
    \caption{Universal D-distinguishable Trees with $r = 2$. 
    }
    \label{figure:D-distiguishable Universal}
\end{figure}
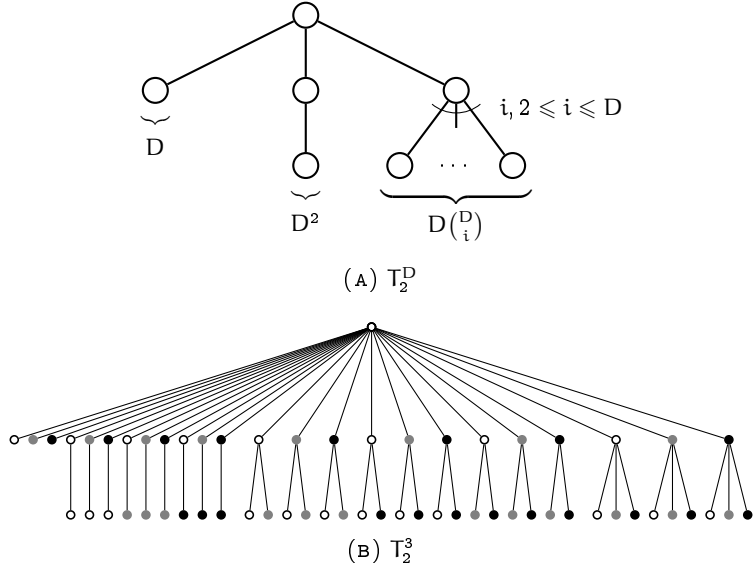

\end{ex}

We note here that, as the proof of Theorem~\ref{thm:universal d-distinguishing trees} demonstrates how to construct a finite universal tree $T$ of radius $r$ for any $r \geq 1$, it follows that:

\begin{cor}\label{cor:bounded deg}
    If $S$ is a $D$-distinguishable tree of radius less than or equal to $r$ ($r \geq 1$, $D \geq 2$), then $S$ has bounded maximum degree.
\end{cor}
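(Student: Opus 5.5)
The plan is to show that the universal tree $T_r^D$ from Theorem~\ref{thm:universal d-distinguishing trees} is finite, and then read off the degree bound. Let $S$ be a $D$-distinguishable tree of radius at most $r$. By Theorem~\ref{thm:universal d-distinguishing trees}, $S$ is a branched subgraph of $T_r^D$, so $S$ is a subgraph of $T_r^D$. Hence $\Delta(S) \le \Delta(T_r^D)$, and it suffices to prove that $T_r^D$ is a finite tree. The bound will then depend only on $r$ and $D$.

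Finiteness goes by induction on $r$. For $r=1$ we have $T_1^D = K_{1,D}$, which is finite. Suppose $T_{r-1}^D$ is finite, with $m$ vertices. Then it has finitely many branched subgraphs (at most $2^{\deg(x_{r-1}^D)}$, counting the singleton root). In the construction, the copies of each branched subgraph $B$ attached to the new root $x$ must receive pairwise distinct colorings under any distinguishing $D$-coloring. Otherwise, swapping two identically colored copies of $B$ while fixing everything else gives a nontrivial color-preserving automorphism, since both copies are joined to $x$ through their roots. So the number of copies of $B$ is at most the number of $D$-colorings of $B$, which is at most $D^{|B|} \le D^{m}$. Therefore
\[
\deg(x_r^D) \;\le\; 2^{\deg(x_{r-1}^D)}\, D^{m},
\]
and every other vertex of $T_r^D$ has degree at most $\Delta(T_{r-1}^D)+1$. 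This makes $T_r^D$ finite and gives an explicit bound on its maximum degree.

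The only delicate point is that the construction attaches ``as many copies as possible while maintaining $D$-distinguishability.'' I need this maximum to exist, which is exactly the swapping bound above. Finally, the case where $S$ has two central vertices is already handled by Theorem~\ref{thm:universal d-distinguishing trees}, since there the embedding uses one of the central vertices as the root. No extra work is required, and the corollary follows with the bound $\Delta(S)\le \Delta(T_r^D)$.
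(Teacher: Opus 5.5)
Your proposal is correct and follows the paper's argument: the paper deduces the corollary in one line from the finiteness of $T_r^D$ together with the fact that every such $S$ is a branched subgraph of it, so $\Delta(S)\le\Delta(T_r^D)$. Your inductive copy-counting bound, $\deg(x_r^D)\le 2^{\deg(x_{r-1}^D)}D^{|T_{r-1}^D|}$, makes explicit the finiteness of $T_r^D$ that the paper asserts without proof.
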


In particular, we can observe that when $r = 2$, any $D$-distinguishable tree has maximum degree less than or equal to \[
D + D^2 + D \sum_{i=2}^{D} \binom{D}{i}  = D2^{D}. \]

We note also that, while it is true by Theorem~\ref{thm:universal d-distinguishing trees} that $T_r^D$ contains every $D$-distinguishable tree of radius at most $r$ as a branched subgraph and that any branched subgraph of $T_r^D$ of radius $r$ is $D$-distinguishable, it is not necessarily the case that if $S$ is a branched subgraph of $T_r^D$ of radius strictly less than $r$ that $D(S) = D$. For example, when $D \geq 2$, any universal tree with radius $r > 1$ and designated root $x$ will have a branched subgraph that is $K_{1,D+1}$, which is not $D$-distinguishable. 

We now establish the existence of a universal tree $U_r^D$ for all $D$-distinguishable trees $T$ with radius $r \geq 1$, $D \geq 2$, such that $\rho^D(T) = F(T)$.

\begin{thm}\label{thm:kp-good universal d-distinguishing trees}
    Let $D$ and $r$ be positive integers with $D \geq 2$.
    There exists a rooted tree $U_r^D$ which contains every $D$-distinguishable tree $T$ having radius at most $r$ and $\rho^D(T) = F(T)$ as a branched subgraph.
    Further, for $r \geq 2$, every branched subgraph $T$ of $U_r^D$ having radius $r$ is $D$-distinguishable and is such that $\rho^D(T) = F(T)$.
\end{thm}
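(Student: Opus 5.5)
We mirror the proof of Theorem~\ref{thm:universal d-distinguishing trees}, building $U_r^D$ by recursion on $r$. The additional ingredient is a local description of when $\rho^D(T)=F(T)$ for a rooted tree. Fix a root $x$ that is fixed by every automorphism under consideration. Then a distinguishing $D$-coloring of $T$ is the same thing as an assignment of a color to $x$ together with distinguishing $D$-colorings of the branches, subject to one condition: isomorphic branches must receive non-isomorphic colorings.

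Likewise, a fixing set of $T$ is a union of fixing sets of the branches, and within each isomorphism class of branches all but at most one copy must contain a point of the fixing set. Both $F$ and $\rho^D$ therefore decompose over isomorphism classes of branches. This gives the following criterion: $\rho^D(T)=F(T)$ holds exactly when one can realize a minimum fixing set as the complement of a single color class, say the color $0$. Concretely:

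\begin{enumerate}
\item every branch admits a distinguishing $D$-coloring in which the nonzero colors occupy exactly a minimum fixing set of that branch;
\item within each isomorphism class of $m$ copies of a branch, there are enough pairwise inequivalent such colorings. That is, one copy may be entirely colored $0$, and the remaining $m-1$ copies receive distinct optimal colorings.
\end{enumerate}

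First I would prove this decomposition lemma carefully, using Theorem~\ref{thm: leaf fixing set} to arrange that the optimal colorings are supported on leaves.

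For $r=1$, I would take $U_1^D=K_{1,D}$. Stars $K_{1,t}$ with $t\le D$ satisfy $F=t-1$, and coloring the leaves with $t-1$ distinct nonzero colors plus one $0$ shows $\rho^D=t-1$.

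For $r\ge 2$, I would take the components of $U_r^D-x$ to be copies of the branched subgraphs $B$ of $U_{r-1}^D$ that themselves satisfy condition (1). I would attach as many copies of each as the criterion allows: one copy colored all $0$, plus one copy for each inequivalent $D$-coloring of $B$ whose nonzero support is a minimum fixing set. Finiteness is automatic, since only finitely many such colorings exist. Universality then follows by induction exactly as before. Take a central vertex $y$ of $T$. Each component of $T-y$ satisfies (1) by the decomposition lemma, and so embeds in $U_{r-1}^D$ by induction. Its multiplicity is at most the allowed number, because otherwise two copies would share a color pattern, or $\rho^D(T)$ would exceed $F(T)$.

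One subtlety is that the recursion must be on a rooted version of the property. A branch need not itself be a tree with $\rho^D=F$ in the unrooted sense, because its root is held fixed. So I would phrase the induction hypothesis for rooted trees with root-preserving automorphisms, and check that $U_{r-1}^D$ is universal for that rooted class.

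For the converse, let $T$ be a branched subgraph of $U_r^D$ of radius $r\ge2$. The argument follows Theorem~\ref{thm:universal d-distinguishing trees}. If the center of $T$ is $\{x\}$, then $x$ is fixed, and the criterion applies directly: each multiplicity is within the allowed bound, so the assembled coloring is distinguishing. Its nonzero class is a union of branch fixing sets, missing at most one copy per class, and this union is a minimum fixing set of $T$. Hence $\rho^D(T)\le F(T)$. The reverse inequality $\rho^D\ge F$ always holds.

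I expect the main obstacle to be the bicentral case, where the center is $\{x,y\}$ with $y$ in branch $b_i$. An automorphism might swap $x$ and $y$. Coloring $x$ with a nonzero color to break the swap would cost an extra vertex outside a fixing set, which could destroy the equality $\rho^D=F$. I would try to show that the swap is already broken by the chosen optimal colorings. If it is not, I would show that $F(T)$ itself increases by one in that situation, for example because the fixing set must also break the swap, so equality persists. If this fails, I would refine the construction so that for radius-$r$ branched subgraphs the swapping configuration cannot occur. This step is the one I would check most carefully, possibly on small cases of $U_2^D$.
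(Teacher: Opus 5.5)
\textbf{The bicentral case fails, so the second assertion is false there.} Your unicentral argument is sound. Running the recursion on the rooted property (root held fixed) is a real improvement on the paper's ``follows similarly''. But the step you flagged cannot be completed, and none of your three fallbacks works. The paper's sketch hides the same problem: recoloring $x$, as in Theorem~\ref{thm:universal d-distinguishing trees}, costs an extra vertex.

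Take $D=r=2$. Let $T'$ consist of a vertex $x$ adjacent to two leaves $a_1,a_2$, to a vertex $y$ carrying two leaves $b_1,b_2$, and to a vertex $z$ carrying one leaf. Then $T'$ has radius $2$ and unique center $x$. Also $F(T')=2=\rho^2(T')$: color $a_1,b_1$ with $1$ and everything else $0$.

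Consider any universal $U_2^2$ whose vertices all lie within distance $2$ of the root; this includes yours and the one in Example~\ref{kptdr_ex}, which has $D=2$ singleton branches and $\binom{1}{1}=1$ branch $K_{1,2}$. Since $x$ is the only vertex of $T'$ with eccentricity at most $2$, it must map to the root. So $U_2^2$ has the branches $xa_1$, $xa_2$ and the branch on $\{x,y,b_1,b_2\}$ at its root.

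Their union $H$ is a branched subgraph of radius $2$ with center $\{x,y\}$. Here $F(H)=2$ but $\rho^2(H)=3$. The leaves at $x$ must get colors $0$ and $1$, and likewise at $y$. If $c(x)=c(y)$, the reflection exchanging the two halves preserves $c$. Hence $c(x)\neq c(y)$, and both color classes have size $3$. With $D$ leaves on each side one gets $F=2D-2<2D-1=\rho^D$ for every $D$.

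Your fallbacks each fail on this example:
\begin{itemize}
\item[(a)] Every coloring of $H$ with two nonzero vertices that is distinguishing for the stabilizer of $x$ is reflection-invariant.
\item[(b)] Write $F_x$ for the fixing number relative to that stabilizer. Then $F_x(H)=2\ge 1$, and any nonempty such fixing set already excludes the swap, because an automorphism fixing a vertex on one side of the edge $xy$ cannot exchange $x$ and $y$. So $F$ exceeds $F_x$ only when $F_x=0$.
\item[(c)] The offending branches cannot be excluded, because $T'$ forces them to be present.
\end{itemize}
What your argument does prove is the second assertion when the root is the unique center. In the bicentral case it needs an extra hypothesis: either $F_x(T)=0$, or some optimal $x$-rooted coloring is preserved by no automorphism exchanging $x$ and $y$.

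\textbf{Your multiplicity rule also conflates rigid and non-rigid branches.}
\begin{itemize}
\item If $B$ is rigid as a rooted tree, fixing $m$ copies costs $m-1$. This is attained by one all-$0$ copy and $m-1$ copies with exactly one nonzero vertex each. So up to $1+(D-1)|B|$ copies are allowed, which is where the paper's $D$ singletons and $2D-1$ copies of $K_{1,1}$ come from. Your rule asks for nonzero support equal to a minimum fixing set, here $\emptyset$, and so allows at most two copies. Universality then fails. For example, with $D=2$, three paths of length $2$ sharing an endpoint give a tree with $F=\rho^2=2$ that needs three copies.
\item If $B$ is not rigid, no copy can be all $0$, so your extra all-$0$ copy overcounts. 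This breaks the property even with a unique center: for $D=2$, two copies of the $K_{1,2}$ branch at a root give $F=2$ but $\rho^2=3$.
\end{itemize}
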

\begin{proof}
    We construct $U_r^D$, the universal tree of radius $r \geq 1$ and distinguishing number $D \geq 2$ with the property that $\rho^D(S) = F(S)$, by recursion.

For $r = 1$, we define the universal tree $U_1^D$ to be the star $K_{1,D}$ with designated root $x$ adjacent to $D$ leaves. When $D = 2$, the only trees with radius 1 are $K_2 = K_{1,1}$ and $K_{1,2}$, which are both branched subgraphs of $K_{1,2}$. Additionally, $\rho^2(K_2)=1=F(K_2)$ and $\rho^2(K_{1,2})=1=F(K_{1,2})$. When $D \geq 3$, the only $D$-distinguishable trees with radius $1$ are star graphs $K_{1,D}$ and $\rho^D(K_{1,D})=D-1=F(K_{1,D})$. Thus, $K_{1,D}$ is a universal tree of radius $1$ with distinguishing number $D$. 

For $r \geq 2$, the universal tree $U_r^D$ is formed by taking all possible branched subgraphs of $U_{r-1}^D$ and attaching as many copies of each branched subgraph as possible (while maintaining $D$-distinguishability and the property that $\rho^D(U_r^D) = F(U_r^D)$) to a new vertex $x_r^D$ via an edge from the copy of $x_{r-1}^D$.
The proof that this construction is universal follows similarly to that of Theorem~\ref{thm:universal d-distinguishing trees}.
\end{proof}

As above, to better illustrate the construction, we provide the following example of the universal tree of radius 2, $U_2^D$, with distinguishing number $D \geq 2$ and the property that $\rho^D(U_2^D) = F(U_2^D)$.

\begin{ex}\label{kptdr_ex}
    Let $U_r^D$ denote the universal tree of radius $r \geq 1$ with distinguishing number $D \geq 2$ and the property that $\rho^D(U_r^D) = F(U_r^D)$ and let $x_r^D$ denote the root of $U_r^D$. 

    For radius $r = 2$ and a given distinguishing number $D$, we can construct the universal tree $U_2^D$ from the universal tree $U_1^D$. Recall that for $r = 1$, the universal tree is the star $K_{1,D}$ with root having degree $D$. The universal tree $U_2^D$ is formed by taking all possible branched subgraphs of $K_{1,D}$ and attaching as many copies of each branched subgraph as possible (while maintaining $D$-distinguishability and the property that $\rho^D(U_r^D) = F(U_r^D)$) to a new vertex $x_2^D$ via an edge from the copy of $x_1^D$. In a $D$-distinguishing coloring, we refer to vertices in the largest color class as receiving a neutral color. 

    In particular, $U_1^D$ has two types of branched subgraphs: singletons ($x_1^D$)
    and star graphs $K_{1,i}$, $1 \leq i \leq D$, rooted at $x_1^D$.
    Each of these types may be repeated in the following way:
    \begin{enumerate}
        \item For singletons: To maintain $D$-distinguishability, we can have at most $D$ singletons with $D-1$ vertices receiving a color that is not neutral. Additionally, the fixing number of $D$ singletons is $D-1$, so it follows that $D$ singletons are permitted. 
        
        \item For $K_{1,i}$ stars, $2\leq i\leq D$: $F(K_{1,i})=i-1$. To maintain $D$-distinguishability with non-neutral colors on no more than $i-1$ vertices for each $K_{1,i}$, we may color $x_1^D$ and one of the leaves with a neutral color. There are then $\binom{D-1}{i-1}$ choices for the remaining $i-1$ vertices.

        Note that $F(K_{1,1}) = 1$, and there are two different ways that a $K_{1,1}$ component of $U_2^D - x_2^D$ can be colored with at most one non-neutral color while maintaining $D$-distinguishability of $U_{2}^D$: either one vertex receives a non-neutral color, or both vertices are neutral.
        Thus, there are $2(D-1) + 1 = 2D - 1$ copies of $K_{1,1}$ possible.
    \end{enumerate}
\end{ex}
    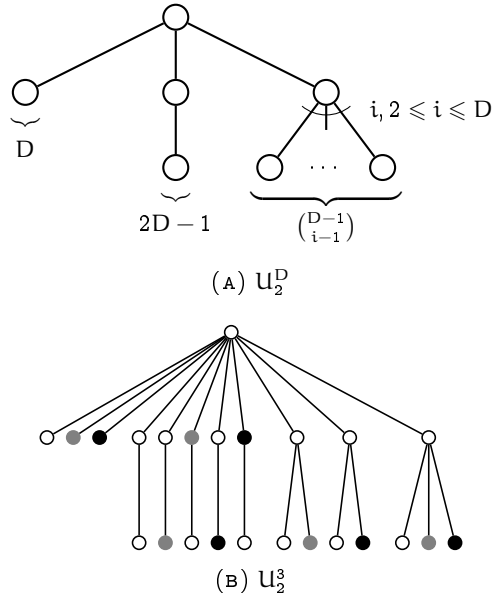
\begin{figure}[!h]
    \centering
    \begin{subfigure}{\textwidth}
    \centering
    \begin{tikzpicture}
    [every node/.style={circle, thick, draw=black!100, fill=none, minimum size=.3cm}]

    \node (r) at (0,2) {};
    \node (1) at (-2,1) {};
    \node (2) at (0,1) {};
    \node (3) at (2,1) {};
    \node (4) at (0,0) {};
    \node (5) at (1.25,0) {};
    \node (6) at (2.75,0) {};

    \foreach \i in {1,2,3}
    {
    \draw[thick] (r) -- (\i);
    }
    \draw[thick] (2) -- (4);
    \draw[thick] (5) -- (3) -- (6);
    \draw[thick] (3) -- (2,.5);

    \draw (2,.7) arc (270:310:.5);
    \draw (2,.7) arc (270:230:.5);
    \node[rectangle, draw=none, fill=none] (ibounds) at (3.4,.75) {\small $i, 2 \leq i \leq D$};

    \node[rectangle, draw=none, fill=none] (dots) at (2,0) {$\dots$};

    \node[rectangle, draw=none, fill=none] (brace1) at (-2,.6) {$\smallunderbrace{\hspace{.25cm}}$};

    \node[draw=none,fill=none] (exp1) at (-2,.25) {\small $D$};

    \node[rectangle, draw=none, fill=none] (brace2) at (0,-.4) {$\smallunderbrace{\hspace{.25cm}}$};

    \node[rectangle, draw=none,fill=none] (exp2) at (0,-.75) {\small $2D - 1$};

    \node[rectangle, draw=none, fill=none] (brace3) at (2,-.4) {$\underbrace{\hspace{2cm}}$};

    \node[rectangle, draw=none,fill=none] (exp3) at (2,-.8) {\small $\binom{D-1}{i-1}$};

    \end{tikzpicture}
    \caption{$U_2^D$}
    \label{subfig:KP good D}
    \end{subfigure}

    \bigskip
    
    \begin{subfigure}{\textwidth}
    \centering
    \begin{tikzpicture}
        [semithick, every node/.style={circle, semithick, draw=black!100, fill=none, minimum size=5pt, inner sep=0pt}, scale=.35]
    
        \node (r) at (7,0) {};

        \node (a0) at (0,-4) {};
        \node[draw=black!50, fill=black!50] (a1) at (1,-4) {};
        \node[fill=black] (a2) at (2,-4) {};

        \foreach \i in {0,...,2}
        {
        \draw (a\i) -- (r);
        }

        \node (a3) at (3.5,-4) {};
        \node (a4) at (4.5,-4) {};
        \node[draw=black!50, fill=black!50] (a5) at (5.5,-4) {};
        \node (a6) at (6.5,-4) {};
        \node[fill=black] (a7) at (7.5,-4) {};

        \node (b3) at (3.5,-8) {};
        \node[draw=black!50, fill=black!50] (b4) at (4.5,-8) {};
        \node (b5) at (5.5,-8) {};
        \node[fill=black] (b6) at (6.5,-8) {};
        \node (b7) at (7.5,-8) {};

        \foreach \i in {3,...,7}
        {
        \draw (a\i) -- (r);
        \draw (b\i) -- (a\i);
        }

        \node[draw=black!50, fill=black!50] (b82) at (10,-8) {};
        \node[fill=black] (b92) at (12,-8) {};
        
        \foreach \i in {8,9}
        {
        \node (a\i) at (2*\i - 6.5, -4) {};
        \draw (a\i) -- (r);
        \node (b\i1) at (2*\i - 7, -8) {};
        \draw (b\i1) -- (a\i) -- (b\i2);
        }
    
        \foreach \i in {10}
        {
        \node (a\i) at (3*\i - 15.5, -4) {};
        \draw (a\i) -- (r);
        \node (b\i1) at (3*\i - 16.5, -8) {};
        \node[draw=black!50, fill=black!50] (b\i2) at (3*\i - 15.5, -8) {};
        \node[fill=black] (b\i3) at (3*\i - 14.5, -8) {};
        \draw (b\i1) -- (a\i) -- (b\i2);
        \draw (b\i3) -- (a\i);
        }
        
        \end{tikzpicture}
        \caption{$U_2^3$}
        \label{sufig:U_2^3}
    \end{subfigure}
    \caption{Universal $D$-distinguishable tree for the property $\rho^{D(T)}(T) = F(T)$
    }
    \label{figure:KP-Good D}
    \end{figure}

We note here that the proof of Theorem~\ref{thm:kp-good universal d-distinguishing trees} demonstrates how to construct a finite universal tree $U$ of radius $r$, $r \geq 1$, with the property that $\rho^D(U) = F(U)$, and it follows that:

\begin{cor}
    If $S$ is a $D$-distinguishable tree of radius less than or equal to $r$ with the property that $\rho^D(S) = F(S)$, $r \geq 1$, $D \geq 2$, then $S$ has bounded maximum degree.
\end{cor}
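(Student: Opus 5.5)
The corollary should follow at once from Theorem~\ref{thm:kp-good universal d-distinguishing trees}, in the same way that Corollary~\ref{cor:bounded deg} follows from Theorem~\ref{thm:universal d-distinguishing trees}. The plan is first to show that $U_r^D$ is a finite tree, and then to use the embedding given by the theorem to bound degrees in $S$ by the maximum degree of $U_r^D$.

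\emph{Finiteness.} I will prove by induction on $r$ that $U_r^D$ is finite. The base case $U_1^D = K_{1,D}$ is finite. Suppose $U_{r-1}^D$ is finite. Then it has only finitely many branched subgraphs, namely at most $2^{\deg(x_{r-1}^D)}$ of them. The key point is that each branched subgraph $B$ is attached to $x_r^D$ only finitely many times. Any two copies of $B$ hanging from $x_r^D$ must receive non-isomorphic colorings, because otherwise the automorphism swapping the two copies and fixing everything else would preserve the coloring. A copy of $B$ admits at most $D^{|B|}$ colorings. So the number of copies of $B$ is at most $D^{|B|}$ whenever the whole tree is $D$-distinguishable, and the extra condition $\rho^D = F$ can only lower this count. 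Summing over the finitely many $B$ gives
\[
\deg(x_r^D) \le \sum_{B} D^{|B|} < \infty .
\]
The non-root vertices of $U_r^D$ keep the degrees they have in copies of branched subgraphs of $U_{r-1}^D$, so they are bounded by induction. Hence $U_r^D$ is finite.

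\emph{Embedding.} Now let $S$ be a $D$-distinguishable tree of radius at most $r$ with $\rho^D(S) = F(S)$. By Theorem~\ref{thm:kp-good universal d-distinguishing trees}, $S$ is isomorphic to a branched subgraph of $U_r^D$. Taking subgraphs does not increase degrees, so
\[
\Delta(S) \le \Delta(U_r^D),
\]
and the right-hand side is a constant depending only on $r$ and $D$.

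The only delicate step is the claim that the recursive construction adds only finitely many copies of each branched subgraph. I will handle it with the colorings pigeonhole argument above, which is the same argument that gives the bound $n_k(T) \le D^k$ for spiders. Once that is in place, the rest is immediate.
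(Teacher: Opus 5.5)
Your proposal is correct and is essentially the paper's argument. The paper also deduces the corollary from the fact that the recursive construction in Theorem~\ref{thm:kp-good universal d-distinguishing trees} yields a finite tree $U_r^D$ containing every such $S$ as a branched subgraph; it simply asserts that finiteness, whereas you prove it with the pigeonhole bound of at most $D^{|B|}$ copies of each branched subgraph $B$, and the statement also follows at once from Corollary~\ref{cor:bounded deg}, since any such $S$ is in particular a $D$-distinguishable tree of radius at most $r$.
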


Finally, for the same reasoning given for $T_r^D$, we note that not every branched subgraph $T$ of $U_r^D$ is $D$-distinguishable when the radius of $T$ is strictly less than $r$.
As an example, one can again consider $K_{1,D+1}$ as a branched subgraph of $U_r^D$ ($r \geq 2$), which has distinguishing number $D + 1$.
On the other hand, it is true that $\rho^{D+1}(K_{1,D+1}) = F(K_{1,D+1})$, and it remains open whether or not a branched subgraph $T$ of $U_r^D$ can be formed such that $\rho^{D(T)}(T) \neq F(T)$.


\section{Eccentric sequences} \label{sec: ecc sequences}

The {\em eccentric sequence} of a connected graph $G$ is the nondecreasing sequence of 
the eccentricities of its 
vertices. If $G$ has diameter $d$ and radius $r$, and $m_i$ denotes the number of 
vertices of eccentricity $i$, then we write the eccentric sequence
as $\bigl(r^{(m_{r})}, (r+1)^{(m_{r+1})}, (r+2)^{(m_{r+2})}, \ldots, d^{(m_d)}\bigr)$, and denote
it by $X(G)$. 

Lesniak characterized the eccentric sequences of trees as follows (see~\cite{lesniak1975eccentric}).

\begin{thm}[\cite{lesniak1975eccentric}]   \label{theo:lesniak}
Let $r,d \in \mathbb{N}$ and $X=\bigl(r^{(m_r)}, (r+1)^{(m_{r+1})},\ldots,d^{(m_d)}\bigr)$
be a sequence of nonnegative integers. Then $X$ is the eccentric sequence of some tree if
and only if the following conditions hold: \\
(i) $m_i \geq 2$ for $i=r+1, r+2,\ldots,d$, and \\
(ii) either $d=2r$ and $m_r=1$, or $d=2r-1$ and $m_r=2$.  
\end{thm}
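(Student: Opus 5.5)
Since Theorem~\ref{theo:lesniak} is a characterization, I will prove the two directions separately. The plan is to root the tree at its center and use two standard facts about trees. First, $\ecc(v) = \dist(v, C) + r$, where $C$ is the center and $r$ the radius. Second, every longest path passes through the center.

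\emph{Necessity.} Let $T$ be a tree with radius $r$ and diameter $d$. By the folklore fact recalled in Section~\ref{sec: prelim results}, $T$ has one or two central vertices. If it has one, then $d = 2r$ and $m_r = 1$. If it has two, they are adjacent, $d = 2r-1$, and $m_r = 2$. This gives (ii).

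For (i), fix $i$ with $r < i \le d$ and take a diametral path $P = v_0 v_1 \cdots v_d$. I will show that $\ecc(v_j) = \max\{j, d-j\}$ for every $j$. The value is at least $\max\{j,d-j\}$ because of the endpoints of $P$. It is at most this value because any vertex $w$ at distance more than $\max\{j,d-j\}$ from $v_j$ could replace an endpoint of $P$ and give a longer path; this step uses the tree structure, namely that the path from $v_j$ to $w$ leaves $P$ at some $v_k$. As $j$ runs over $0,\dots,d$, each value in $\{r+1,\dots,d\}$ is then attained at least twice, once near each end of $P$. Hence $m_i \ge 2$.

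\emph{Sufficiency.} Given a sequence satisfying (i) and (ii), I will construct a tree. Start with a path $v_0 \cdots v_d$. Its eccentricities already supply exactly two vertices of each eccentricity in $\{r+1,\dots,d\}$, together with the center, which consists of one vertex if $d = 2r$ and two if $d = 2r-1$. To obtain the remaining $m_i - 2$ vertices of eccentricity $i$ for $r < i \le d$, attach $m_i - 2$ pendant leaves to the path vertex $u$ with $\ecc(u) = i-1$, that is, a vertex at distance $i-1-r$ from the center. Each such leaf has eccentricity $i$.

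Attaching these leaves does not change the diameter. A leaf attached at $v_j$ has eccentricity $1 + \max\{j, d-j\} \le d$, and $\max\{j,d-j\} \le d-1$, so no path longer than $d$ is created and no existing eccentricity changes. Adding a leaf next to $v_0$ or $v_d$ is never required, since $i - 1 \le d - 1$.

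The only delicate step is this diameter-preservation check in the sufficiency direction. It must hold uniformly in both parity cases, so that no new central vertex appears and $m_r$ stays equal to $1$ or $2$ as required by (ii).
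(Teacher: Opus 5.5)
Your proof is correct. The paper gives no proof of this theorem, since it is cited from Lesniak, but your argument matches how the paper uses the result.

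Your sufficiency construction is exactly the tree $T_X$ defined after Theorem~\ref{theo:fixing-distinguishing-vs-eccentric-sequence}. There the paper says only that ``it is easy to verify'' that $X$ is its eccentric sequence, and your check supplies that verification. Your range $r<i\le d$ is the right one. The paper's stated range $i\le d-1$ is a slip, as its figure and its later remark that $v_{d-1}$ carries $m_d-1$ leaves both show.

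Your necessity argument rests on the formula $\ecc(v_j)=\max\{j,d-j\}$ on a diametral path. This is a special case of the fact the paper invokes for Claim~2.

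Two details are worth writing out.

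\emph{Condition (ii).} You take $d\in\{2r-1,2r\}$ from folklore, but it also follows from your own formula. A vertex $w\notin V(P)$ whose nearest vertex on $P$ is $v_k$ satisfies $\ecc(w)\ge \dist(w,v_k)+\max\{k,d-k\}>\ecc(v_k)$. Hence the central vertices are exactly the one or two $v_j$ minimizing $\max\{j,d-j\}$, which gives $r=\lceil d/2\rceil$ and the correct value of $m_r$.

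\emph{Sufficiency.} The eccentricity $1+\max\{j,d-j\}$ you claim for a leaf added at $v_j$ must also account for the other added leaves, not only the path vertices. It does hold: a leaf added at $v_{j'}$ with $1\le j'\le d-1$ lies at distance $2+|j-j'|\le 1+\max\{j,d-j\}$.
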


We now give sharp upper bounds on the distinguishing number and the fixing number of a tree in
terms of its eccentric sequence. We define two families of exceptional trees. 
For integers $k_1,k_2,\ldots \in \mathbb{N}$, the spider $S_{k_1,k_2,\ldots}$ is the tree of order 
$1+k_1+k_2+\ldots$ obtained from paths of orders $k_1, k_2,\ldots$ by appending these to a single 
vertex. For $k \in \mathbb{N}$, the spider $S_{k,1,1,\ldots,1}$ is called a broom of diameter 
$k+1$. By the end of the broom we mean the leaf of $S_{k,1,1,\ldots,1}$ which is part of the 
path of length $k$.   
Let ${\mathcal D}$ be the family of trees containing all paths, stars, and all spiders of the form
$S_{r,r,k}$ where $1 \leq k < r$. (See Figure~\ref{subfiga:exceptions_D}.)
Let ${\mathcal F}$ be the family of trees containing 
the trees in ${\mathcal D}$ and, in addition, the tree $S_{r,r,1,1,\ldots,1}$ and the trees 
obtained from a path of length $2r$ and a broom of diameter at most $r$ by identifying the central vertex of 
the path with the end of the broom.
(See Figure~\ref{subfigb:exceptions_F-D}.)

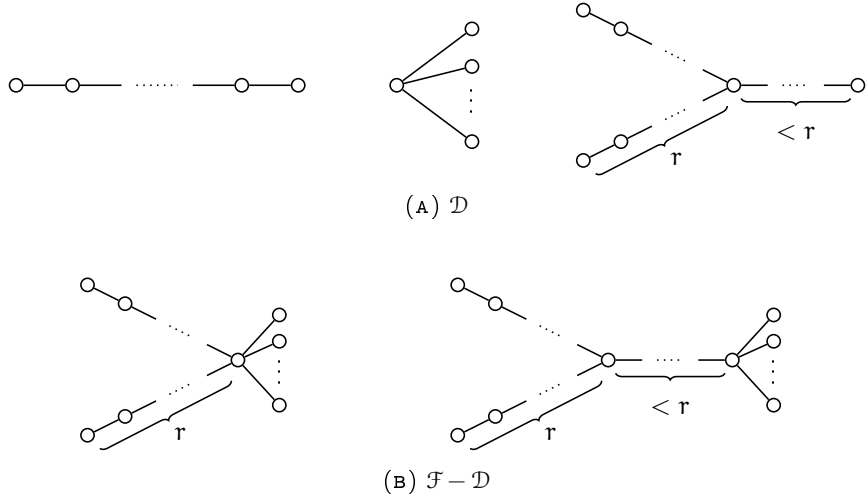
\begin{figure}
    \centering
    \begin{subfigure}{.9\textwidth}
        \centering
        \begin{tikzpicture}
        [semithick, every node/.style={circle, draw=black!100, inner sep=0pt, minimum size=5pt}, scale=.75, baseline=-28]
        
        \node (0) at (0,0) {};
        \node (1) at (1,0) {};
        \node[draw=none] (2) at (2,0) {};
        \node[draw=none] (3) at (3,0) {};
        \node (4) at (4,0) {};
        \node (5) at (5,0) {};
        
        \draw (0) -- (1) -- (2);
        \draw[dotted] (2) -- (3);
        \draw (3) -- (4) -- (5);
        
        \end{tikzpicture}
        \hfill
        \begin{tikzpicture}
        [semithick, every node/.style={circle, draw=black!100, inner sep=0pt, minimum size=5pt}, baseline=-28pt]
        
        \node (0) at (0,0) {};
        \node (1) at (1,.75) {};
        \node (2) at (1,.25) {};
        \node[draw=none] (dots) at (1,-.1) {$\vdots$};
        \node (3) at (1,-.75) {};
        
        \draw (1) -- (0) -- (2);
        \draw (0) -- (3);
        
        \end{tikzpicture}
        \hfill
        \begin{tikzpicture}
        [semithick, every node/.style={circle, draw=black!100, inner sep=0pt, minimum size=5pt}, baseline=-28pt]
        
        \node (0) at (0,0) {};
        \node (nw0) at (-2,1) {};
        \node (nw1) at (-1.5,.75) {};
        \node[draw=none] (nw2) at (-1,.5) {};
        \node[draw=none] (nw3) at (-.5,.25) {};
        \node (sw0) at (-2,-1) {};
        \node (sw1) at (-1.5,-.75) {};
        \node[draw=none] (sw2) at (-1,-.5) {};
        \node[draw=none] (sw3) at (-.5,-.25) {};

        \foreach \i in {n,s}
        {
        \draw (\i w0) -- (\i w1) -- (\i w2);
        \draw[dotted] (\i w2) -- (\i w3);
        \draw (0) -- (\i w3);
        }

        \draw[decoration={brace,mirror,raise=4pt},decorate]
        (-1.9,-1.05) -- node[draw=none, label={[label distance=7pt]-60:$r$}] {} (-.15,-.15);

        \node[draw=none] (e1) at (.55,0) {};
        \node[draw=none] (e2) at (1.1,0) {};
        \node (e3) at (1.65,0) {};

        \draw (0) -- (e1);
        \draw[dotted] (e1) -- (e2);
        \draw (e2) -- (e3);

        \draw[decoration={brace,mirror,raise=5pt},decorate]
        (0.east) -- node[draw=none, label={[label distance=7pt]below:$<r$}] {} (e3.west);
        
        \end{tikzpicture}
        \caption{$\mathcal{D}$}
        \label{subfiga:exceptions_D}
    \end{subfigure}
    \\[2em]
    \begin{subfigure}{.75\textwidth}
        \centering
        \begin{tikzpicture}
        [semithick, every node/.style={circle, draw=black!100, inner sep=0pt, minimum size=5pt}]
        
        \node (0) at (0,0) {};
        \node (nw0) at (-2,1) {};
        \node (nw1) at (-1.5,.75) {};
        \node[draw=none] (nw2) at (-1,.5) {};
        \node[draw=none] (nw3) at (-.5,.25) {};
        \node (sw0) at (-2,-1) {};
        \node (sw1) at (-1.5,-.75) {};
        \node[draw=none] (sw2) at (-1,-.5) {};
        \node[draw=none] (sw3) at (-.5,-.25) {};

        \foreach \i in {n,s}
        {
        \draw (\i w0) -- (\i w1) -- (\i w2);
        \draw[dotted] (\i w2) -- (\i w3);
        \draw (0) -- (\i w3);
        }

        \draw[decoration={brace,mirror,raise=4pt},decorate]
        (-1.9,-1.05) -- node[draw=none, label={[label distance=7pt]-60:$r$}] {} (-.15,-.15);

        \node (e1) at (.55,.6) {};
        \node (e2) at (.55,.25) {};
        \node (e3) at (.55,-.6) {};
        \node[draw=none] (dots) at (.55,-.05) {$\vdots$};

        \draw (e2) -- (0) -- (e1);
        \draw (0) -- (e3);
        
        \end{tikzpicture}
        \hfill
        \begin{tikzpicture}
        [semithick, every node/.style={circle, draw=black!100, inner sep=0pt, minimum size=5pt}]
        
        \node (0) at (0,0) {};
        \node (nw0) at (-2,1) {};
        \node (nw1) at (-1.5,.75) {};
        \node[draw=none] (nw2) at (-1,.5) {};
        \node[draw=none] (nw3) at (-.5,.25) {};
        \node (sw0) at (-2,-1) {};
        \node (sw1) at (-1.5,-.75) {};
        \node[draw=none] (sw2) at (-1,-.5) {};
        \node[draw=none] (sw3) at (-.5,-.25) {};

        \foreach \i in {n,s}
        {
        \draw (\i w0) -- (\i w1) -- (\i w2);
        \draw[dotted] (\i w2) -- (\i w3);
        \draw (0) -- (\i w3);
        }

        \draw[decoration={brace,mirror,raise=4pt},decorate]
        (-1.9,-1.05) -- node[draw=none, label={[label distance=7pt]-60:$r$}] {} (-.15,-.15);

        \node[draw=none] (e1) at (.55,0) {};
        \node[draw=none] (e2) at (1.1,0) {};
        \node (e3) at (1.65,0) {};

        \draw (0) -- (e1);
        \draw[dotted] (e1) -- (e2);
        \draw (e2) -- (e3);

        \draw[decoration={brace,mirror,raise=5pt},decorate]
        (0.east) -- node[draw=none, label={[label distance=7pt]below:$<r$}] {} (e3.west);

        \node (d1) at (2.2,.6) {};
        \node (d2) at (2.2,.25) {};
        \node (d3) at (2.2,-.6) {};
        \node[draw=none] (dots) at (2.2,-.05) {$\vdots$};

        \draw (d2) -- (e3) -- (d1);
        \draw (e3) -- (d3);
        
        \end{tikzpicture}
        \caption{$\mathcal{F} - \mathcal{D}$}
        \label{subfigb:exceptions_F-D}
    \end{subfigure}
    \caption{The exceptions to the bounds in Theorem~\ref{theo:fixing-distinguishing-vs-eccentric-sequence}}
    \label{fig:exceptions}
\end{figure}

\begin{thm} \label{theo:fixing-distinguishing-vs-eccentric-sequence}
Let $T$ be a tree of diameter $d$ and radius $r$, and let 
$(r^{(m_r)}, (r+1)^{(m_{r+1})},\ldots,d^{(m_d)})$ be its eccentric sequence.  Then
\begin{equation} \label{eq:distinguishing-vs-eccentricity-1} 
D(T) \leq  \max\{m_d-1, \max_{r \leq i <d}(m_i-2) \}, 
\end{equation}
unless $T \in {\mathcal D}$, and  
\begin{equation} \label{eq:fixing-vs-eccentricity-1} 
F(T) \leq  m_d-2 + \sum_{i=r}^{d-1} \max\{m_i-3,0\}  \}, 
\end{equation}
unless $T \in {\mathcal F}$.
\end{thm}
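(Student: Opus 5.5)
The plan is to root $T$ at its center and turn the eccentric sequence into level sizes. If $d=2r$ the center is a single vertex $c$, and a vertex has eccentricity $i$ exactly when it lies at distance $i-r$ from $c$. If $d=2r-1$ the center is an edge $c_1c_2$, and the same holds with distance measured to the nearer central vertex. So $m_i$ is the size of level $L_i$, the set of vertices of eccentricity $i$, and $L_d$ consists of peripheral leaves. Every automorphism preserves eccentricities and hence each level. Fix one diametral path $P$; it meets every level $L_i$ with $i>r$ in exactly two vertices, lying in two different branches at the center. These two distinguished vertices per level, together with Lesniak's condition (Theorem~\ref{theo:lesniak}) $m_i\ge 2$, are what produce the ``$-2$'' and ``$-3$'' in the bounds.

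For the fixing bound \eqref{eq:fixing-vs-eccentricity-1} I will build a set $F$ level by level.
\begin{itemize}
\item In $L_d$, put into $F$ all but two of its vertices, leaving out the two endpoints $a,b$ of $P$.
\item In each level $L_i$ with $i<d$, put into $F$ all but three vertices, leaving out the two vertices of $P$ in $L_i$ and one further vertex.
\end{itemize}
To show $F$ is a fixing set, I will go top-down from the center. Suppose every vertex of $L_{i-1}$ is fixed by any automorphism fixing $F$ pointwise. Such an automorphism can then only permute siblings inside $L_i$. Among the unfixed vertices of $L_i$, it must be shown that no two are swappable. The first tool is to choose the third unfixed vertex with a parent different from those of the $P$-vertices, or with a non-isomorphic subtree, whenever possible. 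The second tool is that the two $P$-vertices of $L_i$ are separated by the fixed leaves at level $d$. The subtree below a $P$-vertex reaches depth $d$, so if two swappable siblings both reach level $d$, a descendant of one of them in $L_d$ is already in $F$ unless those descendants are just $a$ and $b$. The case of two central vertices needs a separate short check that $c_1$ and $c_2$ are not swapped.

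For the distinguishing bound \eqref{eq:distinguishing-vs-eccentricity-1} I will use the same level structure but replace fixing by colors. Let $k=\max\{m_d-1,\max_{r\le i<d}(m_i-2)\}$.
\begin{itemize}
\item Give the vertices of $L_d$ distinct colors except that $a$ and $b$ share a color; this uses $m_d-1$ colors.
\item Give the vertices of each $L_i$ with $i<d$ colors so that at most two of them repeat a color already used at that level; this needs $m_i-2$ colors.
\end{itemize}
The top-down argument then goes through as for $F$, with ``has a unique color in its level'' in place of ``fixed''. The remaining task is to show that the equal-colored pair in each level cannot be exchanged. I will arrange this by choosing the repeated pairs so that the pair in $L_i$ has distinct parents, or has subtrees of different depth.

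The main obstacle is the case analysis showing that these choices are always possible. The choice fails only when every level is too thin to choose non-swappable leftover vertices. That happens when the two branches along $P$ are isomorphic bare paths with little else attached, and I expect this to force exactly the families $\mathcal{D}$ and $\mathcal{F}$. Concretely, the bad configurations should be: paths; stars, where $r=1$ and all of $L_d$ are siblings; spiders $S_{r,r,k}$, where a level has $m_i=3$ with a twin pair on the two legs; $S_{r,r,1,\dots,1}$; and a path of length $2r$ with a broom attached at its middle. I would first handle the spider case directly, using the explicit formula for $F$ of a spider and the fact that $n_k\le D^k$ from Section~\ref{sec: prelim results}. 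I would then argue that if $T$ has a high-degree vertex off the center, some level contains an asymmetry that can be used, which rules out failure outside $\mathcal{F}$ (respectively $\mathcal{D}$).
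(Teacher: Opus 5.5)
There is a genuine gap in the fixing-set part, and it is not just the deferred case analysis: the construction itself fails on trees outside $\mathcal{F}$. By design your set $F$ never contains a vertex of $P$, because you omit $a,b$ at level $d$ and both $P$-vertices at every lower level. Suppose $m_d=2$ (so $P$ is forced), $P$ has length $2r$, and everything off $P$ hangs from its midpoint $v$. Then the reflection of $P$ about $v$, extended by the identity, is a nontrivial automorphism fixing every vertex off $P$. Hence no choice of $P$ or of the ``third'' vertices makes $F$ a fixing set. Many such trees are not in $\mathcal{F}$. For example, the spider $S_{3,3,2,1}$ has eccentric sequence $(3^{(1)},4^{(4)},5^{(3)},6^{(2)})$, so the bound in \eqref{eq:fixing-vs-eccentricity-1} equals $0+0+1+0=1$. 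Indeed $F(S_{3,3,2,1})=1$: fix the end of one long leg. The tree is not in $\mathcal{F}$, because the part hanging at the center is a path through $v$, not a star centered at $v$ or a broom with end $v$. Your recipe gives $F=\{x_1\}$ or $F=\{x_2\}$, where $x_1,x_2$ are the center's neighbors on the two short legs, and both are fixed by the reflection. So your expectation that the construction fails only on $\mathcal{D}\cup\mathcal{F}$ is false.

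The missing idea is where the one ``saved'' vertex comes from. The paper splits $T$ into the Steiner tree $T^{(d)}$ of the peripheral vertices and the remainder $W$. All but one peripheral vertex fix $T^{(d)}$ (Claim 1). Each $W^{(i)}$ has at most $m_i-2$ elements, because $T^{(d)}$ already realizes every eccentricity $r<i<d$ at least twice (Claim 2 and Theorem~\ref{theo:lesniak}). All but one vertex of each $W^{(i)}$ then fix $W$. This gives $m_d-1+\sum\max\{m_i-3,0\}$ with no effort, and all the work is in the equality analysis. There the saving comes either from level $d$ (Claim 3) or, in exactly the reflection situation above, from a lower level. If some $w_j\in W^{(j)}$ has a child in $W^{(j+1)}$, it is fixed for free. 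So two vertices of $W^{(j)}$ may be omitted, and the freed slot is spent on a vertex of $P$. Your uniform rule (omit both $P$-vertices plus one more at every level, and omit both $a,b$) cannot make this trade.

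The coloring part has a related weakness. A top-down argument with the criterion ``distinct parents or different depths'' already breaks down at the children of the center. In $S_{r,r,r,r}$ you have $3$ colors for four children of the center, which are siblings of equal depth, so some equal-colored pair violates your criterion. Telling them apart needs bottom-up information. The paper gets this from Claim 1 by coloring two far-apart peripheral leaves $u,v$ alike but their neighbors $u',v'$ differently. That is the source of the $\max\{m_d-1,2\}$ and of the separate case $M(T)\le 1$, which leads to $S_{r,r,k}$. The bound $n_k\le D^k$ for spiders plays no role here.
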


\begin{proof}
Let $T$ be a tree with eccentric sequence $(r^{(m_r)}, (r+1)^{(m_{r+1})},\ldots,d^{(m_d)})$. 
It is easy to verify that  
\eqref{eq:distinguishing-vs-eccentricity-1} holds if $T \in {\mathcal F} - {\mathcal D}$. 
Hence we may assume that $T \notin {\mathcal F}$.  

For $i = r, r+1,\ldots,d$ define $V^{(i)}$ to be the set of vertices of $T$ of 
eccentricity $i$, Then $|V^{(i)}| = m_i$, and each automorphism of $T$ fixes 
$V^{(i)}$ setwise. 
Let $T^{(d)}$ be the Steiner tree of the set $V^{(d)}$, i.e., the smallest subtree of 
$T$ containing all vertices of $V^{(d)}$. It is easy to see that the leaves of $T^{(d)}$ 
are exactly the vertices in $V^{(d)}$.  By Theorem \ref{thm: leaf fixing set} we 
thus obtain the following.  
\\[1mm] 
{\sc Claim 1:} If an automorphism $\phi$ of $T$ fixes $V^{(d)}$ pointwise, 
then $\phi$ fixes $T^{(d)}$. \\[1mm]
It is well-known that for every longest path $P$ of $T$ and every vertex $x$ of $T$, the eccentricity 
of $x$ is the distance to an end of $P$ farthest from $x$. Since $T^{(d)}$ contains all longest paths
of $T$, we thus have the following claim. \\[1mm]
{\sc Claim 2:} For every vertex $x$ of $T^{(d)}$, we have 
${\rm ecc}_{T}(x) = {\rm ecc}_{T^{(d)}}(x)$. \\[1mm]
Let $W$ be the set of vertices of $T$ not in $T^{(d)}$. 
For $i=r, r+1,\ldots,d$ define $W^{(i)} = W \cap V^{(i)}$. 
Since every automorphism of $T$ fixes $V^{(i)}$ and $V(T^{(d)})$, 
setwise, it fixes also $W^{(i)}$ setwise. 
Note that $W^{(r)}=\emptyset$ since the central vertices of a tree are on every longest path, and so
$V^{(r)}$ is contained in $T^{(d)}$. \\[1mm]

We first prove \eqref{eq:distinguishing-vs-eccentricity-1}.
Let $M(T)$ be the right hand side of \eqref{eq:distinguishing-vs-eccentricity-1}.
Suppose to the contrary that $T$ is a tree for which $D(T) > M(T)$. 
 
Define a coloring $c_W$ of the vertices of $W$ by assigning, for each $i \in \{r+1,r+2,\ldots,d-1\}$, distinct colors from $\{1,2,\ldots,|W^{(i)}| \}$ to the vertices of $W^{(i)}$. Since each 
automorphism of $T$ fixes $W$ and $V^{(i)}$, and thus $W^{(i)}$ setwise, and since the vertices
of $W^{(i)}$ receive different colors, it follows that every color-preserving automorphism of $T$
fixes $W$ pointwise. The number of different colors in $c_W$ equals 
$\max_{r+1 \leq i \leq d-1} |W^{(i)}|$. If $i\in \{r+1,r+2,\ldots d-1\}$, then $T^{(d)}$ has at 
least two vertices of eccentricity $i$ by Theorem \ref{theo:lesniak},  
and by Claim 2 their eccentricity in $T$ is also $i$. 
Hence $|W^{(i)}| \leq m_i-2$, and so the number of different colors of $c_W$ is at most
$\max_{r+1 \leq i \leq d-1}(m_i-2)$. 
If $c_d$ is a distinguishing coloring of $T^{(d)}$ with colors $1,2,\ldots,k$, say, then 
combining $c_d$ and $c_W$ yields a distinguishing coloring of $T$. Hence we have 
\begin{equation} \label{eq:eccentric-distinguishing-proof-1}
D(T) \leq \max\big\{ D(T^{(d)}), \max_{r \leq i \leq d-1}(m_i-2) \big\}.
\end{equation}   
We consider two cases: $M(T) \geq 2$ and $M(T) \leq 1$. In the former, we prove that $D(T^{(d)}) \leq M(T)$. In the latter, we derive the contradiction that $T \in \mathcal{D}$. \\[1mm]
{\sc Case 1:} $M(T) \geq 2$. \\
To prove \eqref{eq:distinguishing-vs-eccentricity-1} it suffices to show that 
$D(T^{(d)}) \leq M(T)$. 
Since $T$ is not a star, we have $d\geq 3$, and so 
there exist two vertices $u,v \in V^{(d)}$ which do not have a common neighbor. Let $u'$ and $v'$
be their respective neighbours. Let $c_d$ be a coloring of $T^{(d)}$ with 
$c_d(u)=c_d(v)=1$, $c_d(u')=1$, $c_d(v')=2$, and the vertices in $V^{(d)}-\{u,v\}$ receive 
distinct colors from $\{2,3,\ldots,m_d-1\}$. The remaining vertices of $T^{(d)}$ 
receive color $1$. It is easy to see that every color-preserving automorphism fixes $V^{(d)}$ and thus, 
by Claim 1, all of $T^{(d)}$. The number of colors of $c_d$ is $\max\{m_d-1, 2\}$, which
is not more than $M(T)$. 
This proves that $D(T^{(d)}) \leq M(T)$, and so \eqref{eq:distinguishing-vs-eccentricity-1} follows
in this case. 
\\[1mm]
{\sc Case 2:} $M(T) \leq 1$. \\
Since $m_i \geq 2$ for all 
$i\in \{r+1, r+2,\ldots,d\}$ by Theorem \ref{theo:lesniak}, we have $M(T)=1$, $m_d=2$ 
and $m_i \in \{2, 3\}$ for $i \in \{r+1, r+2,\ldots,d-1\}$. 
If the automorphism group of $T$ is trivial, then the theorem holds, so we may assume for the sake of contradiction that $T$ has a nontrivial automorphism $\phi$. 

Let $P$ be the unique path between the two vertices in $V^{(d)}$, so $p=T^{(d)}$. Then 
$P$ is the unique longest path of $T$, and as above $\phi$ fixes $V(P)$, and $V(T)-V(P)$ setwise.
Since $V(T)-V(P)$ contains no two vertices of the same eccentricity -- otherwise,
$m_i \geq 4$ for some $i$, a contradiction -- it follows that $\phi$ fixes each vertex
that does not belong to $P$. Since $\phi$ is nontrivial, $\phi$ transposes the
two vertices in $V^{(d)}$, i.e. the ends of $P$. Since $T$ is not a path, at least one vertex of $P$ is 
adjacent to a vertex not on $P$, and thus fixed by $\phi$. This implies 
that $P$ has even length and only the unique center vertex $w$ of $P$
has a neighbor outside $P$. For each $i$ with $i \in \{r+1, r+2, \ldots,d-1\}$ there
is at most one vertex of eccentricity $i$ not on $P$. Hence $T-V(P)$ is a path, and one
of its ends is adjacent to the center vertex of $P$. If $k:= |T|-|P|$, then $k <r$, 
otherwise we would have $m_d >2$. We thus obtain the contradiction $T=S_{r,r,k}$, 
and \eqref{eq:distinguishing-vs-eccentricity-1} follows in this case. \\[1mm]

We now prove \eqref{eq:fixing-vs-eccentricity-1}.  
Let $i \in \{r,r+1,\ldots,d-1\}$. 
If $W^{(i)} \neq \emptyset$, then define  $U^{(i)}$ to be the set obtained from $W^{(i)}$ by
removing a single vertex. If $W^{(i)} = \emptyset$, then define $U^{(i)} :=\emptyset$.
Let $U = \bigcup_{i=r+1}^{d-1} U^{(i)}$. An automorphism of $T$ fixes each $W^{(i)}$ setwise, 
and so an automorphism of $T$ that fixes $\bigcup_{i=r+1}^{d-1} U^{(i)}$ pointwise necessarily 
fixes $W$ pointwise.  
Let $i\in \{r+1, r+2,\ldots,d-1\}$. Since by Theorem \ref{theo:lesniak} the tree $T^{(d)}$ contains 
at least two vertices of eccentricity $i$, and these vertices have eccentricity $i$ in $T$ by 
Claim 2, we have $|W^{(i)}| \leq m_i-2$ and so $|U^{(i)}| \leq \max\{ m_i-3, 0\}$.
If $S$ is any fixing set of $T^{(d)}$, then  $S \cup \bigcup_{i=r+1}^{d-1} U^{(i)}$ is a fixing set 
of $T$. Since a set obtained from $V^{(d)}$ by removing one vertex clearly fixes $V^{(d)}$ and 
thus, by Claim 1, $T^{(d)}$, we have $F(T^{(d)}) \leq m_d-1$. In total we obtain 
\begin{equation}
F(T)  \leq  F(T^{(d)}) + \big|\bigcup_{i=r+1}^{d-1} U^{(i)}\big|  
      \leq  m_d-1 + \sum_{i=r+1}^{d-1} \max\{ m_i-3, 0\}     \label{eq:eccentric-fixing-proof-1}
\end{equation}
If the inequality \eqref{eq:eccentric-fixing-proof-1} is strict, then 
\eqref{eq:fixing-vs-eccentricity-1} holds. It thus suffices to prove that equality in 
 \eqref{eq:eccentric-fixing-proof-1} holds only if $T \in \mathcal{F}$. 
Assume that $T$ is a tree for which  \eqref{eq:eccentric-fixing-proof-1} holds with equality. Then
\begin{equation} \label{eq:eccentric-fixing-proof-2}
F(T^{(d)}) = m_d-1 \ \textrm{and} \ |U^{(i)}| 
                = \max\{m_i-3, 0\} \ \textrm{for $i \in \{r+1,r+2,\ldots,d-1\}$}.
\end{equation}
{\sc Claim 3:} $T^{(d)}=S_{r,r,\ldots,r}$, where   $r$ is repeated 
$m_d$ times. \\
To prove Claim 3 it suffices to show that 
$d_T(u,v)=d$ for all $u,v \in V^{(d)}$, $u \neq v$.
Suppose to the contrary that there exist $u,v \in V^{(d)}$ with $d(u,v) \neq d$.
Since there exists at least one pair of vertices at distance $d$ in $V^{(d)}$, there
exist vertices $u,v,w$ with $d(u,v) \neq d(u,w)$. We claim that the set $V^{(d)} - \{v,w\}$ 
is a fixing set of $T^{(d)}$. Indeed, if $\phi$ is an automorphism of $T^{(d)}$ that fixes
$V^{(d)} - \{v,w\}$ pointwise then $\phi$ cannot swap $v$ and $w$ since $\phi$ 
preserves the distances to $u$, so $\phi$ fixes also $v$ and $w$ pointwise. By Claim 1,
$\phi$ fixes $T^{(d)}$. Hence $F(T^{(d)}) \leq m_d-2$, a contradiction 
to \eqref{eq:eccentric-fixing-proof-2}. This proves Claim 3. \\[1mm]
Since $T^{(d)} = S_{r,r,\ldots,r}$, it follows that for $i=r+1, r+2,\ldots,d-1$, 
$T^{(d)}$ has exactly $m_d$ vertices of eccentricity $i$, and so $m_i \geq m_d$ and 
$|W^{(i)}|= m_i-m_d$. Note that $m_d \leq 3$, for $m_i - m_d = |W^{(i)}| \geq |U^{(i)}| \geq m_i - 3$. \\[1mm]
{\sc Case 3a:} $m_d=3$. \\ 
Then $T^{(d)} = T_{r,r,r}$. If $T=T^{(d)}$, then $T \in \mathcal{F}$, so assume 
that $T \neq T^{(d)}$. Then there exists $i \in \{r+1,r+2,\ldots,d-1\}$ for which 
$W^{(i)}\neq \emptyset$, and so  $|U^{(i)}| < |W^{(i)}| =m_i-3$, a contradiction
to \eqref{eq:eccentric-fixing-proof-2}. \\[1mm]
{\sc Case 3b:} $m_d =2$. \\
Then $T^{(d)}$ is a path, which we denote by $P$. 
We may assume that $T$ is not a path, and so $W \neq \emptyset$. 

The set $\bigcup_{i=r+1}^{d-1} U^{(i)}$ is not a fixing set of $T$, otherwise 
$F(T) \leq |\bigcup_{i=r+1}^{d-1} U^{(i)}| = \sum_{i=r+1}^{d-1} \max\{ m_i-3, 0\}$ and 
\eqref{eq:eccentric-fixing-proof-1} would be strict, contradicting our assumption. 
Hence there exists a nontrivial automorphism $\phi$ of $T$ that fixes $\bigcup_{i=r+1}^{d-1} U^{(i)}$.
Then $\phi$ fixes $W$ pointwise, but not $P$. If $v$ is a vertex of $P$ adjacent to 
some vertex of $W$, then $\phi$ fixes also $v$. 
It is easy to see that this implies that $v$ is the unique center vertex of $P$, that $P$ 
has even length $2r$, and that no other vertex of $P$ is adjacent to a vertex of $W$. 
Hence $W\cup \{v\}$ induces a subtree in $T$, which we denote by $T_W$. 

Let $k := {\rm ecc}_{T_W}(v)$. 
If $k=1$, then $T_W$ is a star with center $v$, and $T \in {\mathcal F}$, so we may assume
that $k>1$. We claim that 
\begin{equation} \label{eq:eccentric-fixing-proof-3}
|W^{(j)}|=1 \  \textrm{for} \  j=1,2,\ldots,k-1. 
\end{equation} 
Indeed, if 
$|W^{(j)}|\geq 2$ for some $j \in \{1,2,\ldots,k-1\}$, then some vertex 
$w_j \in W^{(j)}$ has a neighbour in $W^{(j+1)}$. 
We may assume that $U^{(j)}$ has been chosen to contain $w_j$. Now any automorphism of 
$T$ that fixes $W^{(j+1)}$, also fixes $w_j$, and so any automorphism that fixes 
$\bigcup_{i=r+1}^{d-1} U^{(i)} -\{w_i\}$ also fixes $W$ pointwise. Since any vertex
$u \in V(P)-\{w\}$ fixes $P$, the set  $ \{u\} \cup \bigcup_{i=r+1}^{d-1} U^{(i)} -\{w_i\}$
is a fixing set of $T$ of cardinality  $\sum_{i=r+1}^{d-1} \max\{ m_i-3, 0\}$, contradicting 
our assumption that  \eqref{eq:eccentric-fixing-proof-1} holds with equality. 
This proves \eqref{eq:eccentric-fixing-proof-3}.

It is easy to see that \eqref{eq:eccentric-fixing-proof-3} implies that $T_W$ is a broom with end 
$v$, and that the diameter of $T_W$ is at most $k$. Since $k \leq r$, we conclude that
 $T \in {\mathcal F}$, as desired. 
\end{proof}

For sharpness of the bounds in Theorem \ref{theo:fixing-distinguishing-vs-eccentric-sequence} consider
the following tree $T_X$, which was introduced and shown to be extremal among trees of a given eccentric 
sequence for various graph parameters in \cite{Dankelmann2020-eccentric, Dankelmann2021-eccentric}. 

Let $X$ be a sequence realizable as the eccentric sequence of a tree, 
$X=(r^{(m_r)}, (r+1)^{(m_{r+1})},\ldots,d^{(m_d)})$. By Theorem \ref{theo:lesniak} we have 
$r = \lceil \frac{d}{2} \rceil$, $m_r \in \{1,2\}$ and $m_i \geq 2$ for $i\in \{r+1, r+2,\ldots,d\}$. 
We define the tree $T_X$ as the tree obtained from
the path $v_0, v_1,\ldots,v_d$ by attaching $m_i-2$ pendant vertices to vertex $v_{i-1}$ for
$i = r+1, r+2,\ldots,d-1$. It is easy to verify that $X$ is the eccentric sequence of $T_X$.

\begin{figure}
    \centering
    \begin{tikzpicture}
    [semithick, every node/.style={circle, semithick, draw=black!100, fill=none, minimum size=5pt, inner sep=0pt}]

    \foreach \i in {0,...,6}
    {
    \node (v\i) [label={above:$v_{\i}$}] at (\i, 0) {};
    }

    \node (v3a) at (3,-1) {};
    \node (v4a) at (3.66, -1) {};
    \node (v4b) at (4, -1) {};
    \node (v4c) at (4.33, -1) {};
    \node (v5a) at (4.66, -1) {};
    \node (v5b) at (5.33, -1) {};

    \foreach \i in {3,4,5}
    {
    \draw (v\i) -- (v\i a);
    }

    \foreach \i in {4,5}
    {
    \draw (v\i) -- (v\i b);
    }

    \draw (v4) -- (v4c);

    \foreach[evaluate=\i as \myi using int(\i + 1)] \i in {0,...,5}
    {
    \draw (v\i) -- (v\myi);
    }
    
    \end{tikzpicture}
    \caption{A tree with maximum distinguishing number over all those with eccentric sequence $\bigl(3^{(1)}, 4^{(3)}, 5^{(5)}, 6^{(4)} \bigr)$}
    \label{fig:maxD(T)}
\end{figure}
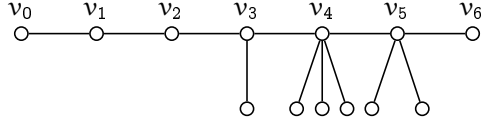

We claim that $T_X$ attains the bound in Theorem \ref{theo:fixing-distinguishing-vs-eccentric-sequence}.
Indeed, for $i=r,r+1,\ldots,d-2$, vertex $v_i$ is adjacent to $m_{i+1}-2$ leaves, and vertex $v_{d-1}$ is adjacent to $m_d-1$ leaves. Since in a distinguishing coloring the leaves adjacent to a vertex
need to receive different colors, it follows that any distinguishing coloring of $T_X$ 
has at least $\max\{m_d-1, \max_{r \leq i <d}(m_i-2) \} = M(T_X)$ colors, and so 
$D(T_X) = M(T_X)$.

We conclude this section by presenting three propositions that apply when more information is known regarding the eccentric sequence: a lower bound on the fixing number, a sufficient condition for a tree to have nontrivial symmetries (and hence nonzero fixing number), and an example of a tree meeting this sufficient condition which has fixing number $1$.

\begin{prop}\label{prop:bigmiddle}
Let $T$ be a tree with diameter $d$, radius $r$, and eccentric sequence $\bigl(r^{(m_r)}, (r+1)^{(m_{r+1})},\ldots,d^{(m_d)}\bigr)$.
If $m_i > m_{i-1} + m_{i + 1}$ for some $2 \leq i \leq d$ (where $m_{d+1} = 0$), then $F(T) \geq m_{i} - m_{i-1} - m_{i+1}$.
\end{prop}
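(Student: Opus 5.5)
The plan is a counting argument on leaves of eccentricity $i$. We exploit the fact that leaves sharing a common neighbor can be swapped by a transposition automorphism. Root $T$ at its center: the central vertex, or either of the two central vertices if there are two. The basic structural fact I will use is the following. For a non-central vertex $v$, let $p(v)$ be its neighbor on the path toward the center. Then $\ecc(p(v)) = \ecc(v) - 1$, and every other neighbor $w$ of $v$ satisfies $\ecc(w) = \ecc(v)+1$.

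This follows from the folklore fact, used in Claim~2 of the previous proof, that eccentricities are realized by distances to the ends of a longest path. Equivalently, $\ecc(v) = \dist(v, \text{center}) + r$ when the center is a single vertex, with the analogous formula when it is two adjacent vertices. I will also check that the hypothesis forces $i > r$, so that vertices of eccentricity $i$ are non-central. Indeed, if $i = r$ then $m_r \le 2$ and $m_{r+1} \ge 2$ by Theorem~\ref{theo:lesniak}, unless $d = r$. That exceptional case, with $i \ge 2$, would mean $T$ is a single edge and would already violate the inequality. So the hypothesis $m_i > m_{i-1}+m_{i+1}$ cannot hold with $i=r$.

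Next I count the leaves in $V^{(i)}$, the set of vertices of eccentricity $i$. Each non-leaf $v \in V^{(i)}$ has at least one neighbor other than $p(v)$. By the structural fact, that neighbor lies in $V^{(i+1)}$ and has $v$ as its parent. The map sending each non-leaf $v$ to such a child is therefore injective into $V^{(i+1)}$. Hence at most $m_{i+1}$ vertices of $V^{(i)}$ are non-leaves, so at least $m_i - m_{i+1}$ of them are leaves. (When $i=d$ this reads $m_d - 0$, consistent with the convention $m_{d+1}=0$.) Each such leaf $\ell$ is adjacent only to $p(\ell) \in V^{(i-1)}$, so these leaves are partitioned into at most $m_{i-1}$ classes according to their common neighbor.

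Finally, any two leaves with the same neighbor are interchanged by an automorphism fixing everything else. So a fixing set must contain all but at most one leaf from each class. Summing over classes gives
\[
F(T) \geq (m_i - m_{i+1}) - m_{i-1},
\]
as required. The step I expect to need the most care is the structural fact about eccentricities of neighbors. In particular, I must handle the bicentral case, where $d = 2r-1$ and $m_r = 2$, correctly. There I will root at the central edge and argue that for $i \ge r+1$ the parent and children behave as claimed. I will do this by writing $\ecc(v) = r + \dist(v, \{c_1, c_2\})$, where $c_1$ and $c_2$ are the two central vertices.
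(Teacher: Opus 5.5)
Your proof is correct and uses the same idea as the paper's. At most $m_{i+1}$ vertices of eccentricity $i$ have a child of eccentricity $i+1$; the rest are leaves hanging from at most $m_{i-1}$ parents of eccentricity $i-1$, and leaves sharing a parent are interchangeable. Your bookkeeping, counting such leaves minus the number of parent classes, is more careful than the paper's, which counts vertices in non-singleton parent classes and then subtracts $m_{i+1}$ without handling classes that mix leaves and non-leaves; your parent/child eccentricity lemma and the check that $i>r$ also fill in steps the paper leaves implicit.
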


\begin{proof}
    In order to minimize the fixing number of a tree with a given eccentric sequence, we must maximize the number of unique branches. Without considering the rest of the eccentric sequence we can consider ``leaf" automorphisms.
    
    Consider the $m_{i-1}$ vertices with eccentricity $i-1$. Call this set of vertices $K$. The $m_i$ vertices with eccentricty $i$ must be adjacent to these $m_{i-1}$. Call this set of vertices $L$. Additionally, the $m_{i+1}$ vertices with eccentricty $i+1$ must be adjacent to these $m_i$. Call this set of vertices $M$. See Figure~\ref{figure:KLM}.
    
    Notice at least $m_i-m_{i-1}$ vertices in $L$ must be adjacent to a vertex in $K$ that is adjacent to more than one vertex in $L$. To distinguish these $m_i-m_{i-1}$ vertices in $L$ from the other $m_{i-1}$, each must be adjacent to a vertex in $M$. However, $m_i-m_{i-1}-m_{i+1}$ of these vertices cannot be made adjacent to a vertex in $M$. Thus, there are $m_i-m_{i-1}-m_{i+1}$ vertices with eccentricity $i$ that must be fixed.
    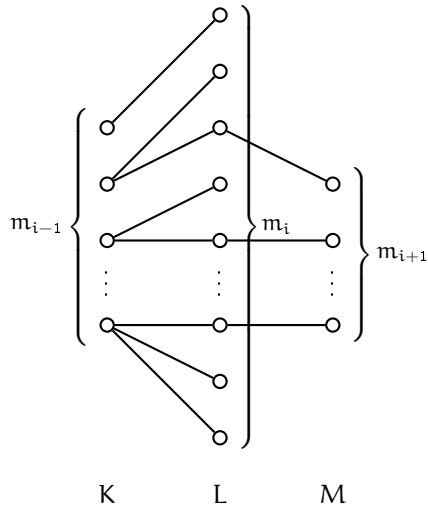
\begin{figure}[!h]
    \centering

    \begin{tikzpicture}
    [scale=.75, every node/.style={circle, thick, draw=black!100, fill=none, inner sep=0pt, minimum size=5pt}]

    \foreach \i in {0,1,2}
    {
    \node (k\i) at (0,\i) {};
    }
    \node[draw=none] (kdots) at (0,-.625) {$\vdots$};
    \node (k-2) at (0,-1.5) {};
    
    \foreach \i in {0,1,2,3,4}
    {
    \node (l\i) at (2,\i) {};
    }
    \node[draw=none] (ldots) at (2,-.625) {$\vdots$};
    \foreach \i in {-4,-3,-2}
    {
    \node (l\i) at (2,\i+.5) {};
    }
    
    \foreach \i in {0,1}
    {
    \node (m\i) at (4,\i) {};
    }
    \node[draw=none] (mdots) at (4,-.625) {$\vdots$};
    \node (m-2) at (4,-1.5) {};

    \node[rectangle, draw=none] (kbrace) at (-.25,0.25) {$\begin{cases}
        & \\[2.625cm]
    \end{cases}$};
    \node[draw=none] (k) at (-1.25,0.25) {\small $m_{i-1}$};

    \node[rectangle, draw=none] (lbrace) at (2.25,0.25) {$\begin{rcases}
        & \\[5.625cm]
    \end{rcases}$};
    \node[draw=none] (l) at (3,0.25) {\small $m_i$};

    \node[rectangle, draw=none] (mbrace) at (4.25,-.25) {$\begin{rcases}
        & \\[1.875cm]
    \end{rcases}$};
    \node[draw=none] (m) at (5.25, -.25) {\small $m_{i+1}$};

    \draw[thick] (k2) -- (l4);
    \draw[thick] (l3) -- (k1) -- (l2) -- (m1);
    \draw[thick] (l1) -- (k0) -- (l0) -- (m0);
    \draw[thick] (m-2) -- (l-2) -- (k-2) -- (l-3);
    \draw[thick] (k-2) -- (l-4);

    \node[draw=none] (K) at (0,-4.5) {$K$};
    \node[draw=none] (L) at (2,-4.5) {$L$};
    \node[draw=none] (M) at (4,-4.5) {$M$};

    \end{tikzpicture}
    \caption{Example of edges between $K$, $L$, and $M$ (see proof of Proposition~\ref{prop:bigmiddle})}
    \label{figure:KLM}
    \end{figure}
\end{proof}

A simple case of the previous proposition is that, if $m_d > m_{d-1}$ for a tree $T$ with eccentric sequence $\bigl(r^{(m_r)}, (r+1)^{(m_{r+1})},\ldots,d^{(m_d)}\bigr)$, then $F(T) \geq m_d - m_{d-1} \geq 1$.
In particular, $T$ has some nontrivial symmetry.
This fact generalizes as follows.

\begin{prop}\label{prop:eventuallymonotoneincreasing}
    Let $T$ be a tree with diameter $d$, radius $r$, and eccentric sequence $\bigl(r^{(m_r)}, (r+1)^{(m_{r+1})},\ldots,d^{(m_d)}\bigr)$.
    If $m_s < m_{s+1} = \cdots = m_d$ for some $s$, then $T$ is not asymmetric.
   
\end{prop}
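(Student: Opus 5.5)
The idea is to translate the eccentric sequence into the sizes of the levels of $T$ rooted at its center. Then a pigeonhole argument locates two isomorphic pendent paths with a common end, and swapping them is a nontrivial automorphism.

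\textbf{Levels.} Let $C$ be the center of $T$, so $|C| = m_r \in \{1,2\}$ by Theorem~\ref{theo:lesniak}, and let $L_j$ be the set of vertices at distance $j$ from $C$. By the standard fact used in Claim~2 (eccentricity is attained at an end of a longest path, and every longest path passes through $C$), every vertex $v$ satisfies $\ecc(v) = r + \dist(v, C)$. Thus $|L_j| = m_{r+j}$ for $0 \le j \le d-r$. Set $t = s + 1 - r$ and $D' = d - r$. The hypothesis becomes
\[
|L_{t-1}| < |L_t| = |L_{t+1}| = \cdots = |L_{D'}| =: m .
\]
Since $t - 1 = s - r \ge 0$, the level $L_{t-1}$ exists. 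Every vertex of $L_j$ with $j \ge 1$ has a unique neighbor in $L_{j-1}$, its parent; this also holds for $j = 1$ in the bicentral case, because the central edge lies inside $L_0$. This gives parent maps $\pi_j \colon L_j \to L_{j-1}$.

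\textbf{Finding the branching vertex.} Let $j \in \{t, \dots, D'\}$ be the largest index for which $\pi_j$ is not a bijection. Such a $j$ exists because $\pi_t$ maps a set of size $m$ to a smaller set, so it is not injective.

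First, $\pi_j$ is not injective. For $j = t$ this is immediate. For $j > t$ we have $|L_j| = |L_{j-1}|$, so a non-bijective map between them cannot be injective either. Hence some vertex $p \in L_{j-1}$ has two distinct children $a, b \in L_j$.

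Next, for every $j' > j$ the map $\pi_{j'}$ is a bijection. So each vertex of $L_{j'-1}$ with $j \le j'-1 < D'$ has exactly one child, and the vertices of $L_{D'}$ have no children. Consequently the descendants of $a$ form a path $a = a_j, a_{j+1}, \dots, a_{D'}$ whose interior vertices have degree $2$ and which ends at a leaf. The same holds for $b$, with a path of the same length $D' - j$.

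\textbf{The automorphism.} Define $\phi$ to exchange $a_k \leftrightarrow b_k$ for $j \le k \le D'$ and to fix every other vertex. Both paths hang from the same vertex $p$ and have the same length, so $\phi$ preserves adjacency. It is nontrivial, so $T$ is not asymmetric.

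The main point needing care is the identity $\ecc(v) = r + \dist(v, C)$ in the bicentral case, where $d = 2r - 1$ and $m_r = 2$. There I would check that the distance to the nearer central vertex is the right quantity and that the central edge creates no extra parent relations. The other point to verify is the boundary case $s = r$, where $L_{t-1} = L_0 = C$. There the pigeonhole step yields a central vertex with two non-central children, which are still swappable.
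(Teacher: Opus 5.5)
Your proof is correct and follows essentially the same route as the paper: you stratify by eccentricity, use the unique parent of each non-central vertex, take the deepest branching vertex at or beyond eccentricity $s$, and observe that below it hang equal-length pendent paths that can be swapped. Your formulation via non-injective parent maps (a vertex with two children) is slightly more careful than the paper's ``degree at least $3$'' phrasing, since it also covers a branching vertex that is a degree-$2$ center, as in a path of even length.
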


\begin{proof}
    
    It is not hard to show that every vertex of eccentricity $i > r$ in $T$ has exactly one neighbor of eccentricity $i - 1$.
    If any vertex of eccentricity $d - 1$ has two neighbors of eccentricity $d$, then at least one must be contained in every fixing set for $T$, and we are done.
    Otherwise, every vertex of eccentricity $d-1$ has degree $2$: it has a single neighbor of eccentricity $d - 2$, and a single leaf neighbor of eccentricity $d$.
    Note that, since $m_s < m_d$, there is some (largest) $i \geq s$ for which a vertex of eccentricity $i$ has degree at least $3$.
    Such a vertex $v$ has $\deg(v) - 1$ neighbors of eccentricity $i + 1$, and each of these lies on a path from $v$ to a vertex of eccentricity $d$.
    All but at most one of these paths must contain a vertex in every fixing set for $T$.
    Since $\deg(v) - 2 \geq 1$, the proof is complete.
   
\end{proof}

\begin{prop}
    For any integers $k$ and $r$ such that $2 \leq k \leq r + 1$, there exists a tree with eccentric sequence $\bigl(r, (r+1)^{(k)}, \ldots, d^{(k)} \bigr)$ and fixing number $1$.
\end{prop}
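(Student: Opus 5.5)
The plan is to give an explicit construction rooted at a single central vertex $c$, so that $d=2r$ and $m_r=1$. The key observation is the following. Suppose every vertex of $T$ lies within distance $r$ of $c$, and at least two distinct branches at $c$ contain a vertex at depth $r$. Then every vertex at depth $j\ge 1$ has eccentricity exactly $r+j$: it has eccentricity at least $r+j$ because it can reach a depth-$r$ vertex in another branch through $c$, and at most $r+j$ because every vertex is within $r$ of $c$. Likewise $c$ has eccentricity $r$. So it suffices to build a rooted tree of height $r$ in which at least two branches reach depth $r$, every depth $1,\dots,r$ contains exactly $k$ vertices, and the fixing number is $1$.

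If $k=2$, the path of length $2r$ works: its eccentric sequence is $(r,(r+1)^{(2)},\dots,(2r)^{(2)})$, and its fixing number is $1$. So assume $3\le k\le r+1$, and build the tree as follows.
\begin{itemize}
\item Take two paths $A=c,u_1,\dots,u_r$ and $B=c,w_1,\dots,w_r$.
\item For each $t=1,\dots,k-2$, attach at $c$ a path $X_t$ with $t$ further vertices, occupying depths $1,\dots,t$.
\item For each $t=1,\dots,k-2$, attach at $u_t$ a pendant path $Q_t$ with $r-t$ vertices, occupying depths $t+1,\dots,r$.
\end{itemize}
The hypothesis $k\le r+1$ gives $t\le k-2\le r-1$. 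Hence $u_t$ exists, each $Q_t$ is nonempty, and each $X_t$ has length strictly less than $r$. Counting by depth: $A$ and $B$ contribute one vertex at every depth. For each $t$, the pair $X_t$ and $Q_t$ together contributes exactly one vertex at every depth $1,\dots,r$. This gives exactly $k$ vertices per level. Both $A$ and $B$ reach depth $r$, so by the observation above the eccentric sequence is $\bigl(r,(r+1)^{(k)},\dots,(2r)^{(k)}\bigr)$.

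It remains to show $F(T)=1$, which is the main step. Every automorphism fixes $c$, so it permutes the branches at $c$, and it must map each branch to an isomorphic one. The branches $X_t$ are paths of pairwise distinct lengths, all shorter than $r$. The branch $B$ is a path of length $r$. The branch containing $A$ has a vertex of degree $3$ at $u_1$ when $k\ge3$. So these branches are pairwise non-isomorphic and each is mapped to itself. Within the $A$-branch, rooted at $u_1$, proceed by induction along $u_1,u_2,\dots$. At $u_t$ the automorphism can only exchange the pendant path $Q_t$, of length $r-t$, with the continuation subtree rooted at $u_{t+1}$.
\begin{itemize}
\item For $t<k-2$, the continuation contains the branch vertex $u_{t+1}$, so it is not a path and no swap is possible.
\item For $t=k-2$, the continuation is the plain path $u_{k-1},\dots,u_r$, which also has length $r-t$, so a swap is possible.
\end{itemize}
Hence $\mathrm{Aut}(T)\cong\mathbb{Z}_2$, generated by exchanging $Q_{k-2}$ with $u_{k-1}\cdots u_r$. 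Fixing the single leaf $u_r$ kills this swap, so $F(T)=1$, and $F(T)\ge 1$ because the swap is a nontrivial automorphism. The only delicate point is checking that no other isomorphisms between branches arise. The distinct lengths of the $X_t$, together with the degree-$3$ vertices along $A$, are exactly what rules these out.
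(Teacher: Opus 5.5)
Your proof is correct and is essentially the paper's construction. Identify your $u_t$ with the paper's $v_{r-t}$. Then both trees are a path of length $2r$ together with, for each $j$ in a $(k-2)$-element set $J\subseteq\{1,\dots,r-1\}$, a pendant path of length $j$ at $v_j$ and a path of length $r-j$ at the center. The paper takes $J=\{1,\dots,k-2\}$, so its surviving involution swaps the two leaves at $v_1$. You take $J=\{r-k+2,\dots,r-1\}$, so yours swaps $Q_{k-2}$ with $u_{k-1}\cdots u_r$. Your depth-based computation of the eccentricities and your inductive determination of $\mathrm{Aut}(T)\cong\mathbb{Z}_2$ along the arm $A$ make rigorous the steps the paper leaves as ``not hard to check.''
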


\begin{proof}
    We begin with a path $P$ of length $2r$, $P = v_0v_2 \cdots v_{2r}$.
    If $k = 2$, then we're done.
    If $2 < k \leq r + 1$, we construct $T$ as follows.
   
    To each vertex $v_j$ in $P$, $1 \leq j \leq k-2$, we append a pendent path of length $j$.
    To $v_r$, we append $k - 2$ pendent paths of lengths $r - 1, \ldots, r - k + 2$.
    It is not hard to check that there are $k$ vertices of eccentricity $r+1, \ldots, 2r$.
    Further, any two vertices with the same eccentricity either lie on pendent paths from $v_r$ of differing lengths, and thus are fixed by any automorphism, or one of them is either a $v_i$ ($0 \leq i \leq r$) or lies on a pendent path from a $v_i$ ($1 \leq i < r$).
    The only such case which allows for a nontrivial automorphism is when these two vertices are the ones of eccentricity $2r$ whose common neighbor is $v_1$.
    Fixing one of these vertices fixes all of $T$.
\end{proof}


\section{Future Work}\label{sec: future work}

In Section~\ref{sec:fixing densities}, we showed that when $T$ is a 2-distinguishable tree of order $n \geq 3$, it has fixing density at most 4/11, and if $T$ is a $D$-distinguishable tree, $D \geq 3$, then $T$ has fixing density at most $\frac{D-1}{D+1}$. These bounds do not hold for $D$-distinguishable graphs in general.
Any disjoint union of complete graphs $K_D$, for instance, has fixing density $1 - 1/D$.
It is not hard to see that this is the maximum fixing density of a $D$-distinguishable graph, for the union of any $D-1$ color classes in a distinguishing $D$-coloring comprises a fixing set.
On the other hand, it is not clear what the maximum fixing density of a connected $D$-distinguishable graph of order $n > D$ should be.
We now exhibit a family of $2$-distinguishable graphs of order $n$ which have fixing numbers greater than $\frac{1}{2} ( n - \log_2{n})$, which can be generalized to a family of $D$-distinguishable graphs with fixing numbers at least $\frac{D-1}{D}(n - \log_2{n} + \log_2{D})$.

Consider the following graph $G_k$ for $k\geq 7$.
Let $G_k$ be obtained from an asymmetric graph $H_k$ on $k$ vertices by adding, 
for each nonempty subset $U$ 
of $V(H)$, two new vertices $v_U$ and $w_U$, and adding all edges between $\{v_U,w_U\}$ and $U$. 
Then $G_k$ is connected and has $k + 2^{k+1}-2$ vertices. 
Every automorphism of $G_k$ fixes $V(H_k)$ since these are the only vertices of degree greater than
$k+1$, and since $H_k$ is asymmetric, every automorphism of $G_k$ fixes $H_k$ pointwise. 
Hence coloring the vertices of $H_k$ and all $v_U$ with color $1$, and all $w_U$ with color $2$
is a distinguishing coloring, so $G_k$ is $2$-distinguishable. 
Since for every nonempty subset $U$ of $V(H_k)$ there exists an automorphism that swaps $v_U$ and 
$w_U$ and leaves all other vertices fixed, every fixing set contains at least $2^k-1$ vertices,
and since $H_k$ is asymmetric, this is optimal. Hence 
$F(G_k) =  2^k-1 = \frac{n}{2} - O(\log n)$. 

We can generalize this construction for $D$-distinguishable graphs, $D \geq 3$, by adding new vertices $v_{1_U}, v_{2_U}, \dots, v_{D_U}$ to an asymmetric $H_k$ in a similar fashion. With this construction, every fixing set contains at least $(D-1)(2^k - 1)$ vertices when $k > D$ and hence, we have fixing number greater than or equal to $\frac{D-1}{D}(n - \log_2{n} + \log_2{D})$, which again approaches the natural upper bound of $(D-1)n/D$.

\begin{ques}
    Are there connected $D$-distinguishable graphs of large order $n$ and fixing number at least $\frac{D-1}{D}(n - o(\log{n}))$?
\end{ques}

In Section~\ref{sec:universal trees} we characterized the $D$-distinguishable trees with radius $r$, $r \geq 1$, $D \geq 2$ by constructing a universal tree $T_r^D$ such that all $D$-distinguishable trees of radius at most $r$ can be found as branched subgraphs of $T_r^D$. We also found a similar collection of universal trees $U_r^D$, $r \geq 1$, $D \geq 2$ for all $D$-distinguishable trees $T$ with the property that $\rho^D(T) = F(T)$. 

Let $\mathcal{T}_n$ be the family of $n$-vertex trees. In~\cite{chung1978graphs}, Chung and Graham considered the minimum number of edges $s(n)$ in an $n$-vertex $\mathcal{T}_n$-universal graph. Since then significant work has been done to determine lower and upper bounds on this parameter (see~\cite{becker2026improved, kaul2025universal}, for example, which also generalize their bounds to graphs with treewidth $k$).
While our universal tree $T_r^D$ is clearly of minimum size when the radius $r$ is fixed, it is not obvious what the minimum size of a universal tree should be when the order, rather than the radius, of the $D$-distinguishable trees it should contain is fixed.

\begin{ques}
    Let $T_{D_n}$ be a universal tree which contains all $D$-distinguishable trees of order $n$, $D \geq 1$. What is the minimum number of edges in $T_{D_n}$?
\end{ques}

Lastly, in a different direction, the penultimate section of our paper leaves some intriguing questions for future research relating eccentric sequences and symmetries in trees.
One natural direction would be to characterize the eccentric sequences of asymmetric trees.
By Proposition~\ref{prop:eventuallymonotoneincreasing}, the eccentric sequence of an asymmetric tree cannot be eventually increasing.
On the other hand, we can find trees with nontrivial symmetries and eccentric sequences which are not eventually increasing using Proposition~\ref{prop:bigmiddle}; consider, for example, the tree $S_{2,2,1,1}$ with eccentric sequence $\bigl( 2, 3^{(4)}, 4^{(2)} \bigr)$.

\begin{ques}
    For which eccentric sequences does there exist a representative asymmetric tree?
\end{ques}

\section{Acknowledgements}

This work is supported by the National Science Foundation under NSF Award 2015425.
Any opinions, findings, and conclusions or recommendations expressed in this material are those of the authors and do not necessarily reflect those of the National Science Foundation.

We thank the organizers and founders of the SAMSA-Masamu program, without whom
this research would not have been possible. 

Meaningful work on this project was completed while the fifth author was visiting the Discrete Mathematics Group at the Institute for Basic Science in Daejeon, South Korea and the fifth author thanks IBS for the generous hospitality.

\bibliographystyle{plain}
\bibliography{references}

\end{document}